\documentclass[12pt,oneside,onecolumn,reqno]{amsart}

\usepackage{amsmath}
\usepackage[dvips]{epsfig}
\usepackage{amsgen, amstext,amsbsy,amsopn, amsthm, amsfonts,amssymb,amscd,euscript,enumerate,url,verbatim,calc,oldgerm}
\usepackage{amsaddr}
\usepackage{bm}
\usepackage{latexsym}
\usepackage{color}
\usepackage{wrapfig}
\usepackage{graphicx}
\usepackage{epstopdf}
\usepackage{pifont}
\usepackage{setspace}
\usepackage{float}
\usepackage{filecontents}
\usepackage{blindtext}
\usepackage{csquotes}
\usepackage{cite}
\usepackage{xcolor}%
\usepackage{textcomp}%
\usepackage{manyfoot}%
\usepackage{booktabs}%
\usepackage{algorithm}%
\usepackage{algorithmicx}%
\usepackage{algpseudocode}%
\usepackage{listings}%
\usepackage{tikz}
\usepackage{placeins}
\usepackage{float}
\usepackage{enumitem}
\usepackage[caption = false]{subfig}
\usepackage{tikz}
\tikzset{->-/.style={decoration={
			markings,
			mark=at position #1 with {\arrow{>}}},postaction={decorate}}}
\usetikzlibrary{shapes,backgrounds,calc}
\usepackage{graphics}

\theoremstyle{plain}

\theoremstyle{definition}

\theoremstyle{remark}

\newtheorem{theorem}{Theorem}[section]

\newtheorem{corollary}[theorem]{Corollary}

\theoremstyle{definition}

\newtheorem{remark}[theorem]{Remark}

\newtheorem{example}[theorem]{Example}
\theoremstyle{remark}

\makeatletter

\newcommand{\Rmnum}[1]{\expandafter\@slowromancap\romannumeral #1@}
\makeatother

\begin{document}
	\title{Local and Global Spectral Bounds for Hermitian $A_\alpha$-Matrices}
	\author{Ravinder Kumar \and   Amisha Shekhawat}
	\address{Department of Mathematics and Computing,\\Dr B R Ambedkar National Institute of Technology Jalandhar,\\ Punjab-144008, India.}
	\email{thakurrk@nitj.ac.in; amishashekhawat02@gmail.com}
	
	\begin{abstract}
We establish local and global spectral bounds for Hermitian
$A_\alpha$-matrices of mixed graphs. Using the first three spectral moments,
we obtain an upper bound for the largest eigenvalue as the largest real zero
of an explicit cubic polynomial. Vertexwise estimates for the extreme
eigenvalues yield new lower bounds for the spectral spread, including a bound
that strictly improves an existing degree-based estimate for ordinary graphs.
A local two-dimensional compression produces a spread bound involving
neighbour-degree data and gain-weighted triangles. This bound is exact for
every mixed orientation of a star and is independent of a known Zagreb-index
bound. We also derive two complementary upper bounds for sums of the smallest
eigenvalues. As further consequences, the spectral estimates provide a
computable convergence guarantee for Richardson graph filtering and a
stability certificate for residual graph-neural-network layers on directed
networks. Numerical examples illustrate the sharpness and mutual
incomparability of the proposed bounds.
	\end{abstract}
	
	\thanks{$^{\dag}$ AMS classification: Primary 05C50, 15A18; Secondary 15A42, 65F10, 68T07}
	\keywords{$A_\alpha$-matrix; spectral spread; Hermitian adjacency matrix; directed graph signal processing; graph neural network stability; Richardson iteration}
	\maketitle 
\section{Introduction}\label{sec1}

Spectral graph theory relates structural properties of a graph to the eigenvalues of matrices associated with it. The adjacency matrix of an undirected graph is real symmetric, whereas the usual adjacency matrix of a digraph need not have a real spectrum. Guo and Mohar \cite{guomohar} introduced a Hermitian adjacency matrix for digraphs and mixed graphs that records edge orientations while retaining a real spectrum.

Let $G$ be a mixed graph with vertex set $V(G)=\{v_1,v_2,\ldots,v_n\}$. Its Hermitian adjacency matrix $H(G)=(h_{ij})$ is defined by
\[
h_{ij} = \begin{cases} 
	1 & \text{if }v_iv_j\text{ is an undirected edge or a digon}, \\
	i & \text{if }v_iv_j\text{ is an arc but }v_jv_i\text{ is not}, \\
	-i & \text{if }v_jv_i\text{ is an arc but }v_iv_j\text{ is not}, \\
	0 & \text{otherwise,}
\end{cases}
\]
where $i=\sqrt{-1}$. Thus, $H(G)$ is Hermitian, $h_{ii}=0$, and $|h_{ij}|\in\{0,1\}$ for $i\ne j$. Throughout the paper, $m$ denotes the number of adjacent unordered vertex pairs, that is, the number of edges in the underlying simple graph of $G$.

Nikiforov \cite{nikiforov} introduced the matrix $A_\alpha(G)=\alpha D(G)+(1-\alpha)A(G)$ for simple graphs. In the Hermitian setting, we consider
\[
A_\alpha(G) = \alpha D(G) + (1-\alpha)H(G)
\]
for $0\leq\alpha\leq1$, where $D(G)=\operatorname{diag}(d_1,\ldots,d_n)$ and $d_k=\sum_j|h_{kj}|^2$. Thus, $A_0(G)=H(G)$, $2A_{1/2}(G)=D(G)+H(G)$, and $A_1(G)=D(G)$.

The $A_\alpha$-spread and sums of $A_\alpha$-eigenvalues have been studied for undirected graphs; see, for example, \cite{linspread,linlaa,linsums}. This motivates estimates in the Hermitian setting that depend on accessible structural information such as degrees and gain-weighted triangle contributions.

The principal novelty of this work is a local two-dimensional compression
method for Hermitian $A_\alpha$-matrices. For each non-isolated vertex
$v_i$, we compress $A_\alpha$ to the subspace
\[
\operatorname{span}\left\{e_i,\frac{He_i}{\sqrt{d_i}}\right\}.
\]
The resulting spread bound incorporates the degrees of the neighbours of
$v_i$ and its local gain-weighted triangle contribution, in contrast to
global estimates based only on the order, size, degree range, or Zagreb
indices. It is exact for every mixed orientation of a star and is independent
of the Zagreb-index bound in \cite{linlaa}. Thus, the compression captures
local spectral information that is not contained in the cited global
estimates.

We also use the first three spectral moments to bound the largest eigenvalue
by the largest real zero of an explicit cubic polynomial. Vertexwise estimates
for both extreme eigenvalues produce further spread bounds; for ordinary
connected graphs without isolated vertices, one of them strictly improves the
estimate in \cite[Theorem~4.1]{linspread}. In addition, Cauchy interlacing and
principal-submatrix compressions yield two upper bounds for sums of the
smallest eigenvalues. Finally, motivated by the use of Hermitian graph
operators in directed graph signal processing and spectral graph neural
networks \cite{furutani,magnet}, we apply our estimates to Richardson graph
filtering and residual graph-learning layers. The cubic moment bound provides
a computable stable step size, a convergence certificate, and a layerwise
Lipschitz bound, whereas the local spread bound gives a lower bound on the
optimal contraction factor. For the directed star $\vec S_6$, this lower bound
is $0.7391$, while the exact optimal contraction factor is $0.7392$. The
examples also show that several of the new and existing bounds are mutually
incomparable.

\section{Local and global spectral bounds}\label{sec2}
Let $G$ be a mixed graph on $n$ vertices, with Hermitian adjacency matrix $H(G)$ and degree matrix $D(G)$, where $d_i=\sum_{j=1}^n|h_{ij}|^2$. For $0\leq\alpha\leq1$, set
\[
A_\alpha(G) = \alpha D(G) + (1-\alpha)H(G).
\]
Write $Z_1(G)=\sum_i d_i^2$, $F(G)=\sum_i d_i^3$, and $t_H(G)=\frac16\operatorname{tr}(H(G)^3)$. The quantity $t_H(G)$ is the real gain-weighted triangle contribution. Direct calculation gives
\begin{align}
	\operatorname{tr}\left( A_\alpha\right) &= 2\alpha m =: s_1, \label{trace1}\\
	\operatorname{tr}\left( A_\alpha^2\right) &= \alpha^2 Z_1(G) + 2(1-\alpha)^2 m =: s_2, \label{trace2}\\
	\operatorname{tr}\left( A_\alpha^3\right) &= \alpha^3 F(G) + 3\alpha(1-\alpha)^2 Z_1(G) + 6(1-\alpha)^3 t_H(G) =: s_3. \label{trace3}
\end{align}
Here we used $\operatorname{tr}(H(G)) = 0$, $\operatorname{tr}(H(G)^2) = 2m$, and $\operatorname{tr}(D(G)H(G)^2) = Z_1(G)$.

\begin{theorem}\label{Thm1}
	Let $G$ be a connected mixed graph with $n\geq2$ vertices and $m$ edges in its underlying graph, and let $\mu_1\leq\cdots\leq\mu_n$ be the eigenvalues of $A_\alpha(G)$. If $\frac12<\alpha\leq1$, then
	\[
	\mu_n(A_\alpha) \leq r_{\max},
	\]
	where $r_{\max}$ is the largest real zero of
	\begin{equation}\label{eq4}
		P(x) = s_1 x^3 - 2s_2 x^2 + s_3 x + s_2^2 - s_1 s_3,
	\end{equation}
	and $s_1, s_2, s_3$ are defined in \eqref{trace1}--\eqref{trace3}.
\end{theorem}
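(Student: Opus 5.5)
The plan is to show that $P(\mu_n)\le 0$. Since $s_1=2\alpha m>0$, the cubic $P$ tends to $+\infty$ as $x\to+\infty$. The intermediate value theorem then gives a real zero of $P$ in $[\mu_n,\infty)$, so $\mu_n\le r_{\max}$. The inequality $P(\mu_n)\le0$ will come from a Cauchy--Schwarz inequality applied to the remaining eigenvalues $\mu_1,\dots,\mu_{n-1}$. That step needs these eigenvalues to be nonnegative, and this is where the hypothesis $\alpha>\frac12$ enters.

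\emph{Step 1: $A_\alpha$ is positive semidefinite.} Write
\[
A_\alpha=(1-\alpha)\bigl(D+H\bigr)+(2\alpha-1)D .
\]
For each adjacent pair $\{v_k,v_l\}$ put $x_{kl}=e_k+\overline{h_{kl}}\,e_l$. Because $|h_{kl}|=1$ and $h_{lk}=\overline{h_{kl}}$, the rank-one matrix $x_{kl}x_{kl}^*$ has ones on the diagonal positions $k,l$, the entry $h_{kl}$ in position $(k,l)$ and $h_{lk}$ in position $(l,k)$. Summing over all $m$ adjacent pairs gives
\[
D+H=\sum_{\{k,l\}} x_{kl}x_{kl}^* ,
\]
so $D+H\succeq 0$. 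Since $2\alpha-1>0$ and $D\succeq0$, it follows that $A_\alpha\succeq 0$, and hence every $\mu_j\ge0$. In fact connectedness with $n\ge2$ makes $D$ positive definite, so $A_\alpha\succ0$, although nonnegativity is all that is used below.

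\emph{Step 2: Cauchy--Schwarz on the remaining eigenvalues.} Put $x=\mu_n$. Apply Cauchy--Schwarz to the vectors $(\mu_j^{1/2})_{j<n}$ and $(\mu_j^{3/2})_{j<n}$:
\[
\Bigl(\sum_{j<n}\mu_j^2\Bigr)^2\le\Bigl(\sum_{j<n}\mu_j\Bigr)\Bigl(\sum_{j<n}\mu_j^3\Bigr).
\]
By \eqref{trace1}--\eqref{trace3}, $\sum_{j<n}\mu_j^k=s_k-x^k$ for $k=1,2,3$. Substituting gives
\[
(s_2-x^2)^2\le(s_1-x)(s_3-x^3).
\]
After expanding, the $x^4$ terms cancel and the inequality becomes
\[
s_1x^3-2s_2x^2+s_3x+s_2^2-s_1s_3\le0,
\]
which is exactly $P(\mu_n)\le 0$.

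\emph{Step 3: conclusion.} Since $s_1>0$, $P(x)\to+\infty$ as $x\to\infty$. Together with $P(\mu_n)\le0$, this places a real zero of $P$ in $[\mu_n,\infty)$, so $\mu_n\le r_{\max}$.

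The only step with real content is Step 1: the Hermitian signs have to be handled so that $D+H$ is exhibited as a sum of rank-one Gram terms. The rest is an algebraic identity. Note that the argument also goes through at $\alpha=\frac12$, since there $A_{1/2}=\frac12(D+H)\succeq0$ directly.
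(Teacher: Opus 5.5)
Your proof is correct and follows the paper's argument essentially step for step. You use the same splitting $A_\alpha=(2\alpha-1)D+(1-\alpha)(D+H)$, with $D+H$ written as a sum of rank-one Gram terms; this is equivalent to the paper's sum of squares $\sum|z_i+h_{ij}z_j|^2$. You then apply the same Cauchy--Schwarz inequality to the remaining eigenvalues to get $P(\mu_n)\le0$, and conclude with the same sign argument from $s_1>0$. Your remark that only positive semidefiniteness is needed, so the bound also holds at $\alpha=\frac12$, is a valid small extension.
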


\begin{proof}
	Since $H(G)$ is Hermitian, so is $A_\alpha(G)$. Moreover,
	\[
	A_\alpha = (2\alpha - 1)D + (1 - \alpha)(D + H).
	\]
	Because $G$ is connected and $n\geq2$, every $d_i$ is positive, so $D$ is positive definite. Furthermore, $D+H$ is positive semidefinite; indeed, for every $z\in\mathbb{C}^n$,
	\[
	z^*(D+H)z=\sum_{\{i,j\}\in E(G)}|z_i+h_{ij}z_j|^2\geq0.
	\]
	Hence, $A_\alpha$ is positive definite for $\alpha>\frac12$, and $\mu_i>0$ for every $i$.
	
	Fix $j\in\{1,\ldots,n\}$. Applying the Cauchy--Schwarz inequality to $(\mu_i^{3/2})_{i\neq j}$ and $(\mu_i^{1/2})_{i\neq j}$ yields
	\begin{equation}\label{eq5}
		\left(\sum_{i \neq j} \mu_i^2 \right)^2 \leq \left(\sum_{i \neq j} \mu_i^3 \right) \left(\sum_{i \neq j} \mu_i \right).
	\end{equation}
	By \eqref{trace1}--\eqref{trace3}, this is
	\[
	(s_2 - \mu_j^2)^2 \leq (s_3 - \mu_j^3)(s_1 - \mu_j),
	\]
	which, after expansion and cancellation, is equivalent to $P(\mu_j) \leq 0$. Since $s_1 = 2\alpha m > 0$, the cubic $P(x)$ is positive to the right of its largest real zero. Therefore, $\mu_j \leq r_{\max}$ for every $j$, and the claim follows by choosing $j = n$.
\end{proof}

\begin{remark}\label{rem:eqthm1}
	For $n\geq3$, equality in Theorem~\ref{Thm1} holds if and only if
	$\mu_1=\cdots=\mu_{n-1}$. This is precisely the equality condition in the Cauchy--Schwarz step with $j=n$. For $n=2$, equality always holds.
\end{remark}

\begin{example}\label{ex1}
	Consider the wheel graph $G=\vec{W}_7$ with $n=7$ vertices and $m=12$
	edges in its underlying graph, oriented as shown in Figure~\ref{fig:graph_ex1}.
	
	\begin{figure}[H]
		\centering
		\begin{tikzpicture}[scale=0.8, auto, swap, >=stealth]
			\node[circle, draw, fill=blue!10, inner sep=0pt,
			minimum size=18pt] (V1) at (0,0) {$1$};
			\node[circle, draw, fill=blue!10, inner sep=0pt,
			minimum size=18pt] (V2) at (90:2.0cm) {$2$};
			\node[circle, draw, fill=blue!10, inner sep=0pt,
			minimum size=18pt] (V3) at (30:2.0cm) {$3$};
			\node[circle, draw, fill=blue!10, inner sep=0pt,
			minimum size=18pt] (V4) at (330:2.0cm) {$4$};
			\node[circle, draw, fill=blue!10, inner sep=0pt,
			minimum size=18pt] (V5) at (270:2.0cm) {$5$};
			\node[circle, draw, fill=blue!10, inner sep=0pt,
			minimum size=18pt] (V6) at (210:2.0cm) {$6$};
			\node[circle, draw, fill=blue!10, inner sep=0pt,
			minimum size=18pt] (V7) at (150:2.0cm) {$7$};
			
			\draw[->, thick] (V1) -- (V2);
			\draw[->, thick] (V1) -- (V3);
			\draw[->, thick] (V1) -- (V4);
			\draw[->, thick] (V1) -- (V5);
			\draw[->, thick] (V1) -- (V6);
			\draw[->, thick] (V1) -- (V7);
			
			\draw[->, thick] (V2) -- (V3);
			\draw[->, thick] (V3) -- (V4);
			\draw[->, thick] (V4) -- (V5);
			\draw[->, thick] (V5) -- (V6);
			\draw[->, thick] (V6) -- (V7);
			\draw[->, thick] (V7) -- (V2);
		\end{tikzpicture}
		\caption{The wheel graph $\vec{W}_7$.}
		\label{fig:graph_ex1}
	\end{figure}
	
	For this digraph, $Z_1(G)=90$, $F(G)=378$, and $t_H(G)=0$. Let $e_1$ correspond to the central vertex and put $u=\frac1{\sqrt6}(0,1,\ldots,1)^T$. The subspace spanned by $e_1$ and $u$ is invariant under $A_\alpha(\vec W_7)$, and the corresponding Hermitian matrix is
	\[
	B_\alpha = \begin{pmatrix}
		6\alpha & i\sqrt6(1-\alpha)\\
		-i\sqrt6(1-\alpha) & 3\alpha
	\end{pmatrix}.
	\]
	Its larger eigenvalue is
	\[
	\mu_n\bigl(A_\alpha(\vec{W}_7)\bigr) = \frac{9\alpha + \sqrt{9\alpha^2 + 24(1-\alpha)^2}}{2}.
	\]
	The other five eigenvalues, obtained from the nonconstant Fourier modes on the directed rim cycle, are
	\[
	3\alpha-2(1-\alpha)\sin\left(\frac{k\pi}{3}\right),\qquad k=1,\ldots,5.
	\]
	Their maximum does not exceed the larger eigenvalue of $B_\alpha$ for $0\leq\alpha\leq1$. Consequently, the displayed expression is the largest eigenvalue of $A_\alpha(\vec W_7)$. At $\alpha=1$, it equals $6$, as expected.
	
	Substituting $Z_1$, $F$, and $t_H(G)$ into \eqref{eq4} gives the values of $r_{\max}$ reported in Table~\ref{tab:bounds_comparison}.
	
	\begin{table}[h!]
		\centering
		\caption{Comparison of the exact spectral radius, the upper bound, and the corresponding absolute error for $\vec{W}_7$.}
		\label{tab:bounds_comparison}
		\begin{tabular}{cccc}
			\toprule
			$\alpha$ & $\mu_n(A_\alpha)$ & $r_{\max}$ & $r_{\max}-\mu_n(A_\alpha)$ \\
			\midrule
			0.6 & 4.0304 & 4.1827 & 0.1523 \\
			0.7 & 4.4316 & 4.5196 & 0.0880 \\
			0.8 & 4.8961 & 4.9339 & 0.0378 \\
			0.9 & 5.4220 & 5.4308 & 0.0088 \\
			1.0 & 6.0000 & 6.0000 & 0.0000 \\
			\bottomrule
		\end{tabular}
	\end{table}
\end{example}

For the next result, let $H=(h_{ij})$ be Hermitian with zero diagonal and
$|h_{ij}|\in\{0,1\}$ for $i\ne j$. Define
\[
d_i=\sum_{j\ne i}|h_{ij}|^2,\quad
D=\operatorname{diag}(d_1,\ldots,d_n),\quad
A_\alpha=\alpha D+(1-\alpha)H,
\]
and put $\delta=\min_i d_i$ and $\Delta=\max_i d_i$. This formulation includes
ordinary graphs and graph models defined through a Hermitian adjacency matrix.

\begin{theorem}\label{Thm2}
	Assume that $\delta>0$, and arrange the eigenvalues of $A_\alpha$ as
	$\mu_1\leq\cdots\leq\mu_n$. For $0\leq\alpha<1$, set
	\[
	\ell_\alpha=
	\begin{cases}
		(2\alpha-1)\Delta,&0\leq\alpha<\frac12,\\
		(2\alpha-1)\delta,&\frac12\leq\alpha<1.
	\end{cases}
	\]
	Then
	\begin{equation}\label{eq6}
		\mu_1(A_\alpha) \le \alpha \delta - \frac{(1-\alpha)^2 \delta}{\Delta - \alpha \delta},
	\end{equation}
	\begin{equation}\label{eq7}
		\mu_n(A_\alpha) \ge \alpha \Delta + \frac{(1-\alpha)^2 \Delta}{\alpha \Delta - \ell_\alpha}.
	\end{equation}
	In particular, if $\frac12\leq\alpha<1$, the denominator in
	\eqref{eq7} equals $\alpha\Delta-(2\alpha-1)\delta$.
\end{theorem}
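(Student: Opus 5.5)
The plan is a vertexwise two-dimensional compression, the same device announced in the introduction. Fix a vertex $v_i$ and put $w=He_i/\sqrt{d_i}$. Since $h_{ii}=0$, $w$ is a unit vector orthogonal to $e_i$, and a direct computation gives the compression of $A_\alpha$ to $\operatorname{span}\{e_i,w\}$:
\[
B_i=\begin{pmatrix}\alpha d_i & (1-\alpha)\sqrt{d_i}\\ (1-\alpha)\sqrt{d_i} & b_i\end{pmatrix},\qquad b_i=w^*A_\alpha w .
\]
By Cauchy interlacing, or by Rayleigh quotients over this subspace, we get $\mu_1(A_\alpha)\le\lambda_{\min}(B_i)$ and $\mu_n(A_\alpha)\ge\lambda_{\max}(B_i)$. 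For \eqref{eq7} I take $d_i=\Delta$, and for \eqref{eq6} I take $d_i=\delta$.

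\textbf{Bounding $b_i$.} The quantity $b_i$ involves neighbour degrees and the local triangle term $e_i^*H^3e_i$, so I only bound it using the two positive semidefinite matrices
\[
z^*(D+H)z=\sum_{\{j,k\}\in E}|z_j+h_{jk}z_k|^2\ge0,\qquad z^*(D-H)z=\sum_{\{j,k\}\in E}|z_j-h_{jk}z_k|^2\ge0 .
\]
For the lower bound, write $A_\alpha=(2\alpha-1)D+(1-\alpha)(D+H)$. Then $b_i\ge(2\alpha-1)\,w^*Dw$, and since $\delta\le w^*Dw\le\Delta$ this gives $b_i\ge\ell_\alpha$ in both ranges of $\alpha$; the sign of $2\alpha-1$ decides whether $\delta$ or $\Delta$ enters. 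For the upper bound, $D-H\succeq0$ gives $w^*Hw\le w^*Dw\le\Delta$, hence $b_i\le\alpha\Delta+(1-\alpha)\Delta=\Delta$. This matches the denominator $\Delta-\alpha\delta$ in \eqref{eq6}.

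\textbf{Extracting the closed forms.} Write $a=\alpha d_i$ and $c=(1-\alpha)\sqrt{d_i}$. The eigenvalues of $B_i$ satisfy $(\lambda-a)(\lambda-b_i)=c^2$, so
\[
\lambda_{\max}=a+\frac{c^2}{\lambda_{\max}-b_i},\qquad \lambda_{\min}=a-\frac{c^2}{b_i-\lambda_{\min}} .
\]
Thus \eqref{eq7} follows once $\lambda_{\max}-b_i\le\alpha\Delta-\ell_\alpha$, and \eqref{eq6} follows once $b_i-\lambda_{\min}\le\Delta-\alpha\delta$.

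\textbf{Main obstacle.} This last step is the hard one, and the two bounds on $b_i$ above point the wrong way for it. Monotonicity alone yields only the weaker estimate $\mu_n\ge$ the larger root of $(x-a)(x-\ell_\alpha)=c^2$, and that root is \emph{at most} $a+c^2/(a-\ell_\alpha)$. So I must either control $\lambda_{\max}-b_i$ directly or find a better test vector. First I would try $x=e_i+t\,w$ with $t$ tuned to $a-\ell_\alpha$ and expand the Rayleigh quotient, possibly adding a correction along $Hw$, that is, a three-dimensional compression. Failing that, I would recheck the claimed inequality on small stars and paths before committing. The symmetric treatment with $b_i\le\Delta$ then gives \eqref{eq6}. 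The final sentence of the theorem is immediate from the definition of $\ell_\alpha$.
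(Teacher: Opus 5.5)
Your proposal stops at the step that carries the content of the theorem, so as written it proves neither \eqref{eq6} nor \eqref{eq7}. You correctly note that $b_i\ge\ell_\alpha$ only gives the larger root of $(x-\alpha\Delta)(x-\ell_\alpha)=(1-\alpha)^2\Delta$, and that this root lies below the target. The same defect defeats your closing claim that the symmetric treatment with $b_i\le\Delta$ gives \eqref{eq6}. The smaller root of $(x-\alpha\delta)(x-\Delta)=(1-\alpha)^2\delta$ is strictly \emph{larger} than $\alpha\delta-(1-\alpha)^2\delta/(\Delta-\alpha\delta)$, so this route also falls short. The remedies you list (a tuned test vector, a third direction, numerical checks) do not supply the missing inequality.

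The paper never touches $b_i$. Since $\sigma(A_\alpha)\subseteq[\mu_1,\mu_n]$, the matrix $(A_\alpha-\mu_1I)(\mu_nI-A_\alpha)$ is positive semidefinite. Its $(i,i)$ entry gives $(a_{ii}-\mu_1)(\mu_n-a_{ii})\ge\sum_{j\ne i}|a_{ij}|^2=(1-\alpha)^2d_i$. With $d_i=\delta$ and the Gershgorin bound $\mu_n\le\Delta$, this yields \eqref{eq6}. With $d_i=\Delta$ and the Gershgorin bound $\mu_1\ge\min_j(2\alpha-1)d_j=\ell_\alpha$, it yields \eqref{eq7}. The inequality involves only $a_{ii}$ and $\|A_\alpha e_i-a_{ii}e_i\|^2$, so the unknown corner entry of your compression never appears.

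Your compression can still be completed, because your sums of squares throw away exactly what is needed. Since $w_i=0$ and $|h_{ij}|=1$ for $v_j\sim v_i$, the edges at $v_i$ contribute $\sum_{j\sim i}|w_i\pm h_{ij}w_j|^2=\|w\|^2=1$. Hence $w^*(D\pm H)w\ge1$, which sharpens your bounds to $\ell_\alpha+(1-\alpha)\le b_i\le\Delta-(1-\alpha)$.

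\begin{itemize}
\item For \eqref{eq7}, take $d_i=\Delta$. The requirement $\lambda_{\max}(B_i)-b_i\le\alpha\Delta-\ell_\alpha$ reduces, via $(\lambda-a)(\lambda-b_i)=c^2$, to $(1-\alpha)^2\Delta\le(\alpha\Delta-\ell_\alpha)(b_i-\ell_\alpha)$. This holds because $b_i-\ell_\alpha\ge1-\alpha$, and $\alpha\Delta-\ell_\alpha\ge(1-\alpha)\Delta$ in both ranges of $\alpha$.
\item For \eqref{eq6}, take $d_i=\delta$. The requirement $b_i-\lambda_{\min}(B_i)\le\Delta-\alpha\delta$ reduces to $(1-\alpha)^2\delta\le(\Delta-\alpha\delta)(\Delta-b_i)$. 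This holds because $\Delta-b_i\ge1-\alpha$ and $\Delta-\alpha\delta\ge(1-\alpha)\delta$.
\end{itemize}

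Completed this way, your route gives the Ritz-value bounds $\mu_n\ge\lambda_{\max}(B_i)$ and $\mu_1\le\lambda_{\min}(B_i)$. These are at least as strong as \eqref{eq6}--\eqref{eq7}, and they come from the compression used in Theorem~\ref{thm:local-compression-spread}. The paper's diagonal argument is shorter and needs no information about $b_i$, at the price of discarding it.
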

\begin{proof}
	Relabel the indices so that $d_1=\delta$ and $d_n=\Delta$. Then
	$a_{11}=\alpha\delta$ and $a_{nn}=\alpha\Delta$.
	
	Since $A_\alpha$ is Hermitian and its spectrum lies in $[\mu_1,\mu_n]$,
	functional calculus gives $(A_\alpha-\mu_1I)(\mu_nI-A_\alpha)\geq0$.
	Its $(i,i)$-entry yields
	\begin{equation}\label{eq8}
		(a_{ii}-\mu_1)(\mu_n-a_{ii})
		\geq\sum_{j\ne i}|a_{ij}|^2=(1-\alpha)^2d_i.
	\end{equation}
	For $i=1$, Gershgorin's theorem gives $\mu_n\leq\Delta$, and hence
	\[
	\alpha\delta-\mu_1\geq
	\frac{(1-\alpha)^2\delta}{\mu_n-\alpha\delta}
	\geq\frac{(1-\alpha)^2\delta}{\Delta-\alpha\delta},
	\]
	which proves \eqref{eq6}.
	
	For $i=n$, \eqref{eq8} gives
	\[
	\mu_n-\alpha\Delta\geq
	\frac{(1-\alpha)^2\Delta}{\alpha\Delta-\mu_1}.
	\]
	Gershgorin's theorem also yields
	$\mu_1\geq\min_i(2\alpha-1)d_i=\ell_\alpha$. Consequently,
	$\alpha\Delta-\mu_1\leq\alpha\Delta-\ell_\alpha$, and
	\eqref{eq7} follows.
\end{proof}

\begin{remark}\label{rem:eqthm2}
	Equality in the diagonal inequality \eqref{eq8} for an index $i$ holds precisely when
	\[
	(A_\alpha-\mu_1I)(\mu_nI-A_\alpha)e_i=0.
	\]
	Thus, $e_i$ has spectral support only in the eigenspaces corresponding to $\mu_1$ and $\mu_n$. Equality in \eqref{eq6} additionally requires $\mu_n=\Delta$, while equality in \eqref{eq7} additionally requires $\mu_1=\ell_\alpha$.
\end{remark}

\begin{theorem}\label{Thm2vertex}
	Under the assumptions of Theorem~\ref{Thm2}, for each $i=1,\ldots,n$,
	\begin{align}
		\mu_1(A_\alpha)&\leq \alpha d_i-
		\frac{(1-\alpha)^2d_i}{\Delta-\alpha d_i},\label{eq6vertex}\\
		\mu_n(A_\alpha)&\geq \alpha d_i+
		\frac{(1-\alpha)^2d_i}{\alpha d_i-\ell_\alpha}.\label{eq7vertex}
	\end{align}
	Consequently,
	\[
	\mu_1(A_\alpha)\leq\min_i\left\{\alpha d_i-
	\frac{(1-\alpha)^2d_i}{\Delta-\alpha d_i}\right\},
	\quad
	\mu_n(A_\alpha)\geq\max_i\left\{\alpha d_i+
	\frac{(1-\alpha)^2d_i}{\alpha d_i-\ell_\alpha}\right\}.
	\]
\end{theorem}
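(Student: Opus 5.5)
The plan is to repeat the argument of Theorem~\ref{Thm2}, but keep the index $i$ arbitrary instead of fixing it at $i=1$ or $i=n$. The starting point is the operator inequality $(A_\alpha-\mu_1I)(\mu_nI-A_\alpha)\geq0$, which holds because $A_\alpha$ is Hermitian with spectrum in $[\mu_1,\mu_n]$. Its $(i,i)$-entry gives \eqref{eq8} for every $i$:
\[
(\alpha d_i-\mu_1)(\mu_n-\alpha d_i)\geq(1-\alpha)^2d_i,
\]
where we used $a_{ii}=\alpha d_i$ and $\sum_{j\ne i}|a_{ij}|^2=(1-\alpha)^2d_i$.

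For \eqref{eq6vertex}, we divide by $\mu_n-\alpha d_i$ and then replace $\mu_n$ by the Gershgorin bound $\mu_n\leq\Delta$. This requires the factor $\mu_n-\alpha d_i$ to be strictly positive. The right-hand side $(1-\alpha)^2d_i$ is positive, since $d_i\geq\delta>0$ and $\alpha<1$. Both factors on the left are nonnegative, because $\mu_1\leq a_{ii}\leq\mu_n$ by the Rayleigh quotient. A nonnegative product exceeding a positive number forces both factors to be strictly positive. Since $\mu_n>\alpha d_i$, we also get $\Delta-\alpha d_i>0$, so the denominator in \eqref{eq6vertex} is positive. The bound then follows:
\[
\alpha d_i-\mu_1\geq\frac{(1-\alpha)^2d_i}{\mu_n-\alpha d_i}\geq\frac{(1-\alpha)^2d_i}{\Delta-\alpha d_i}.
\]

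For \eqref{eq7vertex}, we divide instead by $\alpha d_i-\mu_1>0$ and use the Gershgorin lower bound $\mu_1\geq\min_k\{\alpha d_k-(1-\alpha)d_k\}=\min_k(2\alpha-1)d_k$. This minimum equals $(2\alpha-1)\Delta$ when $\alpha<\tfrac12$ and $(2\alpha-1)\delta$ when $\alpha\geq\tfrac12$, which is exactly $\ell_\alpha$. Hence $0<\alpha d_i-\mu_1\leq\alpha d_i-\ell_\alpha$, and therefore
\[
\mu_n-\alpha d_i\geq\frac{(1-\alpha)^2d_i}{\alpha d_i-\ell_\alpha}.
\]
Taking the minimum over $i$ in \eqref{eq6vertex} and the maximum over $i$ in \eqref{eq7vertex} gives the final displayed consequences.

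The only real obstacle is the sign bookkeeping: the denominators must be positive so that the inequality directions are preserved when we replace $\mu_n$ or $\mu_1$ by the Gershgorin bounds. This is handled by the strict positivity of $(1-\alpha)^2d_i$ together with the Rayleigh-quotient bounds, as described above.
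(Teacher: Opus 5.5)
Your proposal is correct and follows the paper's proof: apply the diagonal inequality \eqref{eq8} at an arbitrary index $i$, then replace $\mu_n$ by the Gershgorin bound $\Delta$ and $\mu_1$ by $\ell_\alpha$. Your sign bookkeeping is more explicit than the paper's, which only asserts that the denominators are positive because $\delta>0$ and $\alpha<1$; you obtain $\mu_n-\alpha d_i>0$ and $\alpha d_i-\mu_1>0$ from the positivity of $(1-\alpha)^2d_i$, which also justifies the intermediate divisions.
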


\begin{proof}
	Apply \eqref{eq8} at an arbitrary index $i$, and use
	$\mu_n\leq\Delta$ and $\mu_1\geq\ell_\alpha$. The denominators are positive because $\delta>0$ and $0\leq\alpha<1$. Taking the minimum and maximum, respectively, completes the proof.
\end{proof}

\begin{corollary}\label{cor1}
	Under the assumptions of Theorem~\ref{Thm2}, the spectral spread $S(A_\alpha) = \mu_n(A_\alpha) - \mu_1(A_\alpha)$ satisfies
	\begin{equation}\label{eq9}
		S(A_\alpha) \ge \alpha(\Delta - \delta) + (1-\alpha)^2 \left( \frac{\Delta}{\alpha\Delta - \ell_\alpha} + \frac{\delta}{\Delta - \alpha\delta} \right).
	\end{equation}
	In particular, if $G$ is a $d$-regular mixed graph ($\delta = \Delta = d$), then for $0\leq\alpha<1$
	\begin{equation}\label{eq10}
		S(A_\alpha) \ge 2(1-\alpha).
	\end{equation}
\end{corollary}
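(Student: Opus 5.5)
The plan is to subtract the two inequalities of Theorem~\ref{Thm2}. Write $S(A_\alpha)=\mu_n(A_\alpha)-\mu_1(A_\alpha)$. Inequality \eqref{eq7} gives
\[
\mu_n\ge \alpha\Delta+\frac{(1-\alpha)^2\Delta}{\alpha\Delta-\ell_\alpha},
\]
and inequality \eqref{eq6} gives
\[
-\mu_1\ge -\alpha\delta+\frac{(1-\alpha)^2\delta}{\Delta-\alpha\delta}.
\]
Adding these two estimates yields \eqref{eq9} immediately, once we know both denominators are positive. No new spectral input is needed, since the hypotheses of Theorem~\ref{Thm2} are exactly those assumed here.

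The only point to verify is the positivity of the denominators for $0\le\alpha<1$ and $\delta>0$.
\begin{itemize}
\item First denominator: $\Delta-\alpha\delta\ge(1-\alpha)\delta>0$.
\item Second denominator, case $\alpha<\frac12$: here $\ell_\alpha=(2\alpha-1)\Delta$, so $\alpha\Delta-\ell_\alpha=(1-\alpha)\Delta>0$.
\item Second denominator, case $\alpha\ge\frac12$: here $\ell_\alpha=(2\alpha-1)\delta$, so $\alpha\Delta-\ell_\alpha\ge\alpha\delta-(2\alpha-1)\delta=(1-\alpha)\delta>0$.
\end{itemize}
This is already implicit in the proof of Theorem~\ref{Thm2}, and I would simply recall it.

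For the regular case, substitute $\delta=\Delta=d$. The first term $\alpha(\Delta-\delta)$ vanishes. The second fraction becomes $d/((1-\alpha)d)=1/(1-\alpha)$. For the first fraction, compute $\alpha d-\ell_\alpha$ in both ranges of $\alpha$:
\begin{itemize}
\item for $\alpha<\frac12$: $\alpha d-(2\alpha-1)d=(1-\alpha)d$;
\item for $\alpha\ge\frac12$: the same expression, $(1-\alpha)d$.
\end{itemize}
So the first fraction is also $1/(1-\alpha)$. The bracket therefore equals $2/(1-\alpha)$, and multiplying by $(1-\alpha)^2$ gives $2(1-\alpha)$, which is \eqref{eq10}.

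There is no real obstacle. The only care needed is the case split in the definition of $\ell_\alpha$, both for positivity and for the regular simplification. In the regular case the two branches coincide, so \eqref{eq10} holds uniformly on $0\le\alpha<1$.
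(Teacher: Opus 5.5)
Your proposal is correct and matches the paper's implicit argument: the corollary comes from adding \eqref{eq7} to the negation of \eqref{eq6}, and the regular case follows by setting $\delta=\Delta=d$, where both fractions reduce to $1/(1-\alpha)$ in either range of $\alpha$. Your positivity check on the denominators is only needed for Theorem~\ref{Thm2} to be well posed, not for the addition itself, but including it does no harm.
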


\begin{remark}
	If $G$ is $d$-regular, then $A_\alpha=\alpha dI+(1-\alpha)H$ and hence
	$S(A_\alpha)=(1-\alpha)S(H)$. Therefore, equality in \eqref{eq10} holds exactly when $S(H)=2$; in particular, equality holds for the connected mixed graph on two vertices.
\end{remark}

The next observation records both the improvement over the earlier degree bound and the full degree-sequence consequence of Theorem~\ref{Thm2vertex}.

\begin{corollary}\label{cor:degree-refinement}
	Under the assumptions of Theorem~\ref{Thm2},
	\begin{align}
		S(A_\alpha)\geq {}&
		\max_{1\leq i\leq n}\left\{\alpha d_i+
		\frac{(1-\alpha)^2d_i}{\alpha d_i-\ell_\alpha}\right\}
		\nonumber\\[-1mm]
		&-\min_{1\leq i\leq n}\left\{\alpha d_i-
		\frac{(1-\alpha)^2d_i}{\Delta-\alpha d_i}\right\}.
		\label{eq:degree-sequence-spread}
	\end{align}
	In particular, \eqref{eq:degree-sequence-spread} is at least as strong as \eqref{eq9}.
\end{corollary}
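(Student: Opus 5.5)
The inequality \eqref{eq:degree-sequence-spread} follows by subtracting the two vertexwise bounds of Theorem~\ref{Thm2vertex}. Write $S(A_\alpha)=\mu_n-\mu_1$. Theorem~\ref{Thm2vertex} gives $\mu_n\ge \max_i f(d_i)$ and $\mu_1\le \min_i g(d_i)$, where
\[
f(x)=\alpha x+\frac{(1-\alpha)^2x}{\alpha x-\ell_\alpha},\qquad g(x)=\alpha x-\frac{(1-\alpha)^2x}{\Delta-\alpha x}.
\]
Hence $S(A_\alpha)\ge \max_i f(d_i)-\min_i g(d_i)$, which is \eqref{eq:degree-sequence-spread}. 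The denominators are positive by the remark in the proof of Theorem~\ref{Thm2vertex}.

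For the comparison with \eqref{eq9}, I will rewrite \eqref{eq9} as $f(\Delta)-g(\delta)$. Expanding, $f(\Delta)-g(\delta)=\alpha\Delta-\alpha\delta+(1-\alpha)^2\bigl(\frac{\Delta}{\alpha\Delta-\ell_\alpha}+\frac{\delta}{\Delta-\alpha\delta}\bigr)$, which is exactly the right-hand side of \eqref{eq9}. Since $d_i=\Delta$ for some index $i$, we get $\max_i f(d_i)\ge f(\Delta)$. Since $d_i=\delta$ for some index, $\min_i g(d_i)\le g(\delta)$. Subtracting these two inequalities shows that the right-hand side of \eqref{eq:degree-sequence-spread} is at least $f(\Delta)-g(\delta)$, so \eqref{eq:degree-sequence-spread} is at least as strong as \eqref{eq9}.

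No step here is a real obstacle. The only points needing care are the algebraic identification of \eqref{eq9} with $f(\Delta)-g(\delta)$ and making sure $\ell_\alpha$ is the same constant in both places. It is, because $\ell_\alpha$ depends only on $\alpha$, $\delta$ and $\Delta$, not on $i$. Note also that no monotonicity of $f$ or $g$ is needed: the comparison only uses that $\Delta$ and $\delta$ are themselves attained degrees. The strict improvement mentioned in the introduction would need a separate argument, but it is not part of this statement.
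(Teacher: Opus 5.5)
Your proof is correct and is the paper's argument: subtract the vertexwise upper bound for $\mu_1(A_\alpha)$ from the vertexwise lower bound for $\mu_n(A_\alpha)$ in Theorem~\ref{Thm2vertex}, then take an index of maximum degree in the maximum and an index of minimum degree in the minimum to recover \eqref{eq9}. Your explicit checks — that $\ell_\alpha$ does not depend on $i$, and that no monotonicity of $f$ or $g$ is needed because $\Delta$ and $\delta$ are attained degrees — spell out what the paper's one-line proof leaves implicit.
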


\begin{proof}
	Subtract the upper bound for $\mu_1(A_\alpha)$ in Theorem~\ref{Thm2vertex}
	from its lower bound for $\mu_n(A_\alpha)$. Choosing an index of maximum
	degree in the first extremum and an index of minimum degree in the second
	extremum gives \eqref{eq9}.
\end{proof}

\begin{remark}\label{rem:strict-improvement}
	For an ordinary connected graph with $\delta>0$, inequality \eqref{eq9}
	strictly improves the degree-based lower bound in \cite[Theorem~4.1]{linspread}.
	Indeed, if $0\leq\alpha<\frac12$, then $\ell_\alpha=(2\alpha-1)\Delta$ and
	\[
	(1-\alpha)^2\left(
	\frac{\Delta}{\alpha\Delta-\ell_\alpha}
	+\frac{\delta}{\Delta-\alpha\delta}\right)
	=(1-\alpha)+\frac{(1-\alpha)^2\delta}{\Delta-\alpha\delta}>\alpha.
	\]
	If $\frac12\leq\alpha<1$, then $\ell_\alpha=(2\alpha-1)\delta$ and
	\[
	\frac{\Delta}{\alpha\Delta-(2\alpha-1)\delta}
	+\frac{\delta}{\Delta-\alpha\delta}>\frac1\alpha.
	\]
	These inequalities give a strict improvement in both parameter ranges.
\end{remark}

For a vertex $v_i$, put
\[
q_i=\sum_{j=1}^n d_j|h_{ji}|^2,
\qquad \tau_i=(H^3)_{ii}.
\]
The number $q_i$ is the sum of the degrees of the neighbours of $v_i$, while
$\tau_i$ is a real local gain-weighted triangle contribution. For an ordinary
graph, $\tau_i=2t_i$, where $t_i$ is the number of triangles containing $v_i$.

\begin{theorem}\label{thm:local-compression-spread}
	Let $G$ be a mixed graph and let $0\leq\alpha<1$. For every vertex $v_i$
	with $d_i>0$, define
	\[
	\beta_i=\frac{\alpha q_i+(1-\alpha)\tau_i}{d_i}.
	\]
	Then
	\begin{equation}\label{eq:local-compression-spread}
		S(A_\alpha)\geq
		\max_{d_i>0}\sqrt{(\alpha d_i-\beta_i)^2+4(1-\alpha)^2d_i}.
	\end{equation}
	Consequently,
	\begin{equation}\label{eq:max-degree-spread}
		S(A_\alpha)\geq2(1-\alpha)\sqrt{\Delta}.
	\end{equation}
\end{theorem}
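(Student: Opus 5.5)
We compress $A_\alpha$ to a two-dimensional subspace and compare spreads. Fix a vertex $v_i$ with $d_i>0$, and put $u=e_i$ and $w=He_i/\sqrt{d_i}$. Since $h_{ii}=0$, the vector $w$ has zero $i$-th entry, and $\|w\|^2=\sum_j|h_{ji}|^2/d_i=1$. Hence $\{u,w\}$ is an orthonormal pair. Let $V=[u\;w]\in\mathbb{C}^{n\times 2}$ and $B=V^*A_\alpha V$. By the Cauchy interlacing theorem for compressions (equivalently, the Rayleigh--Ritz characterization), the eigenvalues $\lambda_1\le\lambda_2$ of $B$ satisfy $\mu_1\le\lambda_1\le\lambda_2\le\mu_n$. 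Therefore $S(A_\alpha)\ge\lambda_2-\lambda_1$.

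Next we compute the entries of $B$.
\begin{itemize}
\item The $(1,1)$ entry is $e_i^*A_\alpha e_i=\alpha d_i$.
\item The off-diagonal entry is $e_i^*A_\alpha w=\alpha d_i\,w_i+(1-\alpha)(He_i)^*He_i/\sqrt{d_i}$. Since $w_i=0$ and $(H^2)_{ii}=d_i$, this equals $(1-\alpha)\sqrt{d_i}$, which is real.
\item The $(2,2)$ entry is
\[
w^*A_\alpha w=\frac{1}{d_i}\Bigl(\alpha\, e_i^*HDHe_i+(1-\alpha)e_i^*H^3e_i\Bigr).
\]
Here $e_i^*HDHe_i=\sum_j d_j|h_{ji}|^2=q_i$ and $e_i^*H^3e_i=\tau_i$, so the $(2,2)$ entry is exactly $\beta_i$.
\end{itemize}
Thus
\[
B=\begin{pmatrix}\alpha d_i&(1-\alpha)\sqrt{d_i}\\(1-\alpha)\sqrt{d_i}&\beta_i\end{pmatrix}.
\]
For a $2\times2$ Hermitian matrix the eigenvalue gap is $\sqrt{(b_{11}-b_{22})^2+4|b_{12}|^2}$. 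Substituting the entries gives the term in \eqref{eq:local-compression-spread}, and maximizing over $i$ proves the main inequality.

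The consequence \eqref{eq:max-degree-spread} follows by choosing $i$ with $d_i=\Delta$ and discarding the nonnegative square $(\alpha d_i-\beta_i)^2$. When $\Delta=0$ the bound is trivial.

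The only delicate point is verifying the $(2,2)$ entry, specifically that $e_i^*HDHe_i=q_i$ and that $\tau_i$ is real. The first follows because $(He_i)_j=h_{ji}$. The second holds because $H^3$ is Hermitian, so its diagonal entries are real. Everything else is routine.
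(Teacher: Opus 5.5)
Your proposal is correct and follows essentially the same route as the paper: compress $A_\alpha$ to $\operatorname{span}\{e_i, He_i/\sqrt{d_i}\}$, identify the $2\times2$ compression with entries $\alpha d_i$, $(1-\alpha)\sqrt{d_i}$, $\beta_i$, and use the min--max (Rayleigh--Ritz) principle to bound $S(A_\alpha)$ below by the compression's eigenvalue gap. You then obtain the max-degree consequence by dropping the square, as the paper does. Your explicit verification of the off-diagonal entry and of the reality of $\tau_i$ only fills in details the paper leaves implicit.
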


\begin{proof}
	Fix $i$ with $d_i>0$ and set
	\[
	x=e_i,\qquad y=\frac{He_i}{\sqrt{d_i}}.
	\]
	Since $h_{ii}=0$ and $\|He_i\|^2=(H^2)_{ii}=d_i$, the vectors $x$ and
	$y$ are orthonormal. The compression of $A_\alpha$ to
	$\operatorname{span}\{x,y\}$ is
	\[
	B_i=
	\begin{pmatrix}
		\alpha d_i&(1-\alpha)\sqrt{d_i}\\
		(1-\alpha)\sqrt{d_i}&\beta_i
	\end{pmatrix}.
	\]
	Indeed,
	\[
	y^*A_\alpha y
	=\frac{\alpha e_i^*HDHe_i+(1-\alpha)e_i^*H^3e_i}{d_i}
	=\frac{\alpha q_i+(1-\alpha)\tau_i}{d_i}=\beta_i.
	\]
	The difference between the two eigenvalues of $B_i$ is
	\[
	\sqrt{(\alpha d_i-\beta_i)^2+4(1-\alpha)^2d_i}.
	\]
	By the min--max principle, the two eigenvalues of the compression lie
	between the least and largest eigenvalues of $A_\alpha$. Hence their
	difference is at most $S(A_\alpha)$, which proves
	\eqref{eq:local-compression-spread}.
	Dropping the nonnegative square and choosing a vertex of maximum degree
	gives \eqref{eq:max-degree-spread}.
\end{proof}

\begin{remark}
	Equality in the bound corresponding to a vertex $v_i$ in
	\eqref{eq:local-compression-spread} holds if and only if the two Ritz values
	of $B_i$ are $\mu_1(A_\alpha)$ and $\mu_n(A_\alpha)$. Thus equality holds
	whenever $\operatorname{span}\{e_i,He_i\}$ carries both extreme eigenvalues.
\end{remark}

\begin{corollary}\label{cor:mixed-star-exact}
	Let $G$ be any mixed orientation of the star $K_{1,n-1}$, where $n\geq2$,
	and let $0\leq\alpha<1$. Then equality holds in
	\eqref{eq:local-compression-spread}. More precisely,
	\[
	S(A_\alpha(G))
	=\sqrt{\alpha^2(n-2)^2+4(1-\alpha)^2(n-1)}.
	\]
\end{corollary}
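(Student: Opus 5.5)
Take $v_1$ to be the centre of the star, so that $d_1=n-1$ and $d_j=1$ for $j\ge2$. In any mixed orientation the centre is joined to each leaf $v_j$ by an entry $h_{1j}\in\{1,i,-i\}$ with $|h_{1j}|=1$, and there are no other edges. Hence every triangle contribution vanishes. The plan is to show that the subspace $W=\operatorname{span}\{e_1,He_1/\sqrt{n-1}\}$ used in the proof of Theorem~\ref{thm:local-compression-spread} for $v_1$ is invariant under $A_\alpha$, and then that its two Ritz values are exactly $\mu_1$ and $\mu_n$. By the equality remark after Theorem~\ref{thm:local-compression-spread}, this gives equality in \eqref{eq:local-compression-spread} at $v_1$. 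The maximum over all vertices cannot exceed $S(A_\alpha)$, so equality then holds for the maximum as well.

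\textbf{Step 1: invariance and the compression at the centre.} Put $u=He_1/\sqrt{n-1}$, which is supported on the leaves with entries $h_{j1}/\sqrt{n-1}$. Then $A_\alpha e_1=\alpha(n-1)e_1+(1-\alpha)\sqrt{n-1}\,u$. For $u$ we have $Du=u$, since every leaf has degree $1$, and $Hu=\frac{1}{\sqrt{n-1}}\sum_j h_{1j}h_{j1}e_1=\sqrt{n-1}\,e_1$, using $h_{1j}h_{j1}=|h_{1j}|^2=1$. So $W$ is invariant. Here $q_1=n-1$ and $\tau_1=0$, so $\beta_1=\alpha$ and
\[
B_1=\begin{pmatrix}\alpha(n-1)&(1-\alpha)\sqrt{n-1}\\(1-\alpha)\sqrt{n-1}&\alpha\end{pmatrix}.
\]
Its eigenvalue gap is $\sqrt{\alpha^2(n-2)^2+4(1-\alpha)^2(n-1)}$, which is the claimed value.

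\textbf{Step 2: the rest of the spectrum.} Since $A_\alpha$ is Hermitian, $W^\perp$ is also invariant. It consists of vectors $z$ with $z_1=0$ and $\sum_j \overline{h_{j1}}z_j=0$. For such $z$ we get $Hz=(\sum_j h_{1j}z_j)e_1=0$, because $h_{1j}=\overline{h_{j1}}$, and $Dz=z$. So $A_\alpha$ acts on $W^\perp$ as $\alpha I$, and the spectrum of $A_\alpha$ is the two eigenvalues of $B_1$ together with $\alpha$ repeated $n-2$ times.

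\textbf{Step 3: the extreme eigenvalues.} It remains to check that $\alpha$ lies between the two eigenvalues of $B_1$. The entry $\alpha$ is a diagonal entry of the $2\times2$ Hermitian matrix $B_1$, so by interlacing it lies between the two eigenvalues. Those eigenvalues are therefore $\mu_1$ and $\mu_n$, and $S(A_\alpha)$ equals the gap of $B_1$.

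When $n=2$ we have $W=\mathbb{C}^2$ and the statement is immediate. The main point needing care is Step 2, namely using the Hermitian gain identities $h_{1j}h_{j1}=1$ so that the result holds for every mixed orientation, not just the undirected star.
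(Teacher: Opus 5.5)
Your proof is correct, but it obtains the spectrum by a different route from the paper.

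The paper first applies a diagonal unitary switching, which is possible because a star is a tree, to reduce $A_\alpha(G)$ to $A_\alpha(K_{1,n-1})$. It then states the spectrum of the ordinary star without derivation: $\alpha$ with multiplicity $n-2$, together with $\lambda_\pm$. Next it places $\alpha$ between $\lambda_-$ and $\lambda_+$ via the estimate $\sqrt{\alpha^2(n-2)^2+4(1-\alpha)^2(n-1)}\geq\alpha(n-2)$. Finally it evaluates the local bound at the centre.

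You instead work directly with the gains. You show that the compression subspace $\operatorname{span}\{e_1,He_1\}$ is itself $A_\alpha$-invariant and that its orthogonal complement is the $\alpha$-eigenspace. For this you use only $h_{1j}h_{j1}=|h_{1j}|^2=1$ and the absence of leaf--leaf edges. You then place $\alpha$ between the eigenvalues of $B_1$ by interlacing, since it is a diagonal entry, rather than by the explicit square-root estimate.

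The switching route is shorter and shows that orientation is irrelevant for any tree, but it relies on the star's spectrum as an unproved fact. Your route is self-contained. It also explains the exactness structurally: invariance of the Ritz subspace carrying both extreme eigenvalues is exactly the condition in the equality remark following Theorem~\ref{thm:local-compression-spread}.
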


\begin{proof}
	Since a star is a tree, a diagonal unitary switching removes all edge
	phases from its Hermitian adjacency matrix. This switching leaves the
	degree matrix unchanged. Hence $A_\alpha(G)$ is unitarily similar to
	$A_\alpha(K_{1,n-1})$. The latter matrix has the eigenvalue $\alpha$ with
	multiplicity $n-2$, and its remaining two eigenvalues are
	\[
	\lambda_\pm
	=\frac{\alpha n\pm
		\sqrt{\alpha^2(n-2)^2+4(1-\alpha)^2(n-1)}}{2}.
	\]
	The square-root term is at least $\alpha(n-2)$, so
	$\lambda_-\leq\alpha\leq\lambda_+$. Therefore,
	\[
	S(A_\alpha(G))=\lambda_+-\lambda_-
	=\sqrt{\alpha^2(n-2)^2+4(1-\alpha)^2(n-1)}.
	\]
	At the central vertex, $d_i=n-1$, $q_i=n-1$, and $\tau_i=0$, so
	$\beta_i=\alpha$. Substitution in
	\eqref{eq:local-compression-spread} gives exactly the same quantity.
\end{proof}

\begin{example}\label{ex:spread-comparison}
	The local bound \eqref{eq:local-compression-spread} and the Zagreb-index
	bound in \cite[Theorem~3.6]{linlaa} are independent. First consider the
	ordinary star $K_{1,5}$ and let $\alpha=0.7$. At its central vertex,
	$d_i=5$, $q_i=5$, and $\tau_i=0$, so that $\beta_i=0.7$. Hence
$
	S(A_{0.7})\geq
	\sqrt{(3.5-0.7)^2+4(0.3)^2(5)}
	=\sqrt{9.64}=3.1048.
	$
	The eigenvalues of $A_\alpha(K_{1,5})$ show that
	$S(A_{0.7})=3.1048$, so the new bound is exact. For comparison, the earlier
	degree bound in \cite[Theorem~4.1]{linspread}, inequality \eqref{eq9}, and
	the Zagreb-index bound in \cite[Theorem~3.6]{linlaa} give, respectively,
	$2.9286$, $2.9661$, and $2.2261$.
	
	Conversely, for the cycle $C_5$ at $\alpha=0.7$, every vertex satisfies
	$d_i=2$, $q_i=4$, and $\tau_i=0$. Thus
	\eqref{eq:local-compression-spread} gives $0.8485$, whereas the
	Zagreb-index bound gives $0.8660$. Therefore neither bound dominates the
	other in general.
\end{example}

\begin{theorem}\label{Thm3}
	Let $G$ be a mixed graph on $n$ vertices, let $0\leq\alpha\leq1$, and let $K=G[S]$ be the induced mixed subgraph on
	$S=\{v_1,\ldots,v_r,v_t\}$, where $1\leq r\leq n-1$. Put
	$K_t=K-v_t$ and
	\[
	L_t=\min_{1\leq i\leq r}
	\{\alpha d_G(v_i)-(1-\alpha)d_{K_t}(v_i)\}.
	\]
	Then
	\begin{align}\label{eq11}
		\sum_{i=1}^{r}\mu_i(A_\alpha(G))
		\leq {}&\alpha\sum_{i=1}^{r}d_G(v_i)\nonumber\\
		&-\frac{\sqrt{(\alpha d_G(v_t)-L_t)^2+
				4(1-\alpha)^2d_K(v_t)}-(\alpha d_G(v_t)-L_t)}{2}.
	\end{align}
\end{theorem}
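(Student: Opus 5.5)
The plan is to combine Cauchy interlacing with a two-dimensional compression, in the spirit of Theorem~\ref{thm:local-compression-spread}. Let $M$ be the principal submatrix of $A_\alpha(G)$ indexed by $S=\{v_1,\ldots,v_r,v_t\}$. Note that $M$ is \emph{not} $A_\alpha(K)$: its diagonal uses the degrees $d_G$, while its off-diagonal part is $(1-\alpha)H(K)$.

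\textbf{Step 1 (interlacing).} Since $M$ has order $r+1$, Cauchy interlacing gives $\mu_i(A_\alpha(G))\leq\mu_i(M)$ for $1\leq i\leq r$. Summing these inequalities yields
\[
\sum_{i=1}^r\mu_i(A_\alpha(G))\leq\operatorname{tr}M-\mu_{r+1}(M)
=\alpha\sum_{i=1}^r d_G(v_i)+\alpha d_G(v_t)-\mu_{\max}(M).
\]
It therefore suffices to show $\mu_{\max}(M)\geq\lambda_+$, where $\lambda_+$ is the larger eigenvalue of
\[
C=\begin{pmatrix}\alpha d_G(v_t)&(1-\alpha)\sqrt{d_K(v_t)}\\ (1-\alpha)\sqrt{d_K(v_t)}&L_t\end{pmatrix}.
\]
A direct computation gives
\[
\lambda_+=\alpha d_G(v_t)+\frac{\sqrt{(\alpha d_G(v_t)-L_t)^2+4(1-\alpha)^2d_K(v_t)}-(\alpha d_G(v_t)-L_t)}{2},
\]
and substituting this into the display above produces exactly \eqref{eq11}.

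\textbf{Step 2 (block structure and Gershgorin).} Order $S$ with $v_t$ last and write
\[
M=\begin{pmatrix}N&b\\ b^*&\alpha d_G(v_t)\end{pmatrix},
\qquad N=\alpha\operatorname{diag}(d_G(v_1),\ldots,d_G(v_r))+(1-\alpha)H(K_t).
\]
Here $b=(1-\alpha)(h_{it})_{i\le r}$, so $\|b\|=(1-\alpha)\sqrt{d_K(v_t)}$. Row $i$ of $(1-\alpha)H(K_t)$ has absolute row sum $(1-\alpha)d_{K_t}(v_i)$. Gershgorin's theorem therefore gives $\lambda_{\min}(N)\geq L_t$, and hence $u^*Nu\geq L_t$ for every unit vector $u\in\mathbb{C}^r$.

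\textbf{Step 3 (compression and monotonicity).} Take $u=b/\|b\|$ if $b\neq0$, and any unit vector otherwise. Compress $M$ to the orthonormal pair $\{e_t,(u,0)\}$. This gives
\[
B=\begin{pmatrix}\alpha d_G(v_t)&\|b\|\\ \|b\|&\gamma\end{pmatrix},
\qquad \gamma=u^*Nu\geq L_t,
\]
after adjusting a unimodular phase, which does not change the spectrum. By the min--max principle, $\mu_{\max}(M)\geq\lambda_{\max}(B)$.

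The key remaining point, and the only mildly delicate one, is that the larger eigenvalue
\[
\tfrac12\left(a+\gamma+\sqrt{(a-\gamma)^2+4c^2}\right)
\]
of a real symmetric $2\times2$ matrix with fixed entries $a$ and $c$ is nondecreasing in $\gamma$. This holds because its derivative in $\gamma$ is $\tfrac12\bigl(1-(a-\gamma)/\sqrt{(a-\gamma)^2+4c^2}\bigr)\geq0$. Consequently $\lambda_{\max}(B)\geq\lambda_+$, which completes Step 1.

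The degenerate case $d_K(v_t)=0$ is covered by the same argument with $b=0$. In that case $\lambda_+=\max\{\alpha d_G(v_t),L_t\}$, and this is indeed at most $\mu_{\max}(M)$.
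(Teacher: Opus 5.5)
Your proposal is correct and follows essentially the same route as the paper's proof. Both use the block decomposition of $M=A_\alpha(G)[S]$ with $v_t$ last, Gershgorin to get $u^*Nu\geq L_t$, the compression onto $\operatorname{span}\{(b/\|b\|,0)^T,e_t\}$, monotonicity of the larger $2\times2$ eigenvalue in its diagonal entry, and then the trace identity plus Cauchy interlacing, with the case $b=0$ handled separately; the phase adjustment you mention is unnecessary, since $u^*b=\|b\|$ is already real.
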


\begin{proof}
	Write the principal submatrix $M=A_\alpha(G)[S]$ in block form as
	\[
	M=\begin{pmatrix}C&b\\b^*&a\end{pmatrix},
	\qquad a=\alpha d_G(v_t).
	\]
	Here $C=A_\alpha(G)[S\setminus\{v_t\}]$ and
	$\|b\|^2=(1-\alpha)^2d_K(v_t)$. If $b=0$, then
	$\mu_{r+1}(M)\geq\max\{a,L_t\}$, which gives \eqref{eq11}. Suppose that $b\ne0$ and put $u=b/\|b\|$. The compression of $M$ to the span of $(u,0)^T$ and $(0,1)^T$ is
	\[
	\begin{pmatrix}u^*Cu&\|b\|\\\|b\|&a\end{pmatrix}.
	\]
	Gershgorin's theorem gives $\lambda_{\min}(C)\geq L_t$, so $u^*Cu\geq L_t$. The largest eigenvalue of the displayed compression is increasing in its first diagonal entry. Hence,
	\[
	\mu_{r+1}(M)\geq
	\frac{a+L_t+\sqrt{(a-L_t)^2+4\|b\|^2}}{2}.
	\]
	Since
	$\operatorname{tr}(M)=\alpha\sum_{i=1}^r d_G(v_i)+a$,
	we obtain an upper bound for the sum of the smallest $r$ eigenvalues of $M$. Finally, Cauchy's interlacing theorem gives
	$\sum_{i=1}^r\mu_i(A_\alpha(G))\leq\sum_{i=1}^r\mu_i(M)$, which proves \eqref{eq11}.
\end{proof}

\begin{remark}\label{rem:eqthm3}
	When $b\ne0$, equality in \eqref{eq11} holds precisely when equality occurs in each of the three estimates used in its proof: $u^*Cu=L_t$, the largest eigenvalue of the two-dimensional compression equals $\mu_{r+1}(M)$, and
	$\sum_{i=1}^r\mu_i(A_\alpha(G))=\sum_{i=1}^r\mu_i(M)$.
\end{remark}

\begin{example}\label{ex2}
	Consider the directed cycle graph $G = C_4$ with $n = 4$ vertices and $m = 4$ edges, as shown in Figure~\ref{fig:c4_graph}.
	
	\begin{figure}[H]
		\centering
		\begin{tikzpicture}[scale=0.8, auto, swap]
			\node[circle, draw, fill=blue!10, inner sep=0pt, minimum size=18pt] (V1) at (0,1.5) {$v_1$};
			\node[circle, draw, fill=blue!10, inner sep=0pt, minimum size=18pt] (V2) at (1.5,1.5) {$v_2$};
			\node[circle, draw, fill=blue!10, inner sep=0pt, minimum size=18pt] (V3) at (1.5,0) {$v_3$};
			\node[circle, draw, fill=blue!10, inner sep=0pt, minimum size=18pt] (V4) at (0,0) {$v_4$};
			
			\draw[thick, ->, >=stealth] (V1) -- (V2);
			\draw[thick, ->, >=stealth] (V2) -- (V3);
			\draw[thick, ->, >=stealth] (V3) -- (V4);
			\draw[thick, ->, >=stealth] (V4) -- (V1);
		\end{tikzpicture}
		\caption{The directed cycle graph $C_4$.}
		\label{fig:c4_graph}
	\end{figure}
	
	Let $\alpha=1/2$. The matrix $A_{1/2}(C_4)$ is
	\[
	A_{1/2}(C_4) = 
	\begin{pmatrix}
		1 & i/2 & 0 & -i/2 \\
		-i/2 & 1 & i/2 & 0 \\
		0 & -i/2 & 1 & i/2 \\
		i/2 & 0 & -i/2 & 1
	\end{pmatrix}
	\]
	with eigenvalues $\mu_1 = 0$, $\mu_2 = 1$, $\mu_3 = 1$, and $\mu_4 = 2$. 
	
	Applying Theorem~\ref{Thm3} for induced subgraphs $H = C_4[S]$ with target vertex $v_t$, we evaluate the upper bounds for the eigenvalue sum $\sum_{i=1}^r \mu_i(A_{1/2}(C_4))$ across subset sizes $r = 1, 2, 3$. The exact eigenvalue sums, theoretical upper bounds, and corresponding absolute errors are reported in Table~\ref{tab:c4_bounds_comparison}.
	
	\begin{table}[h!]
		\centering
		\caption{Comparison of the exact eigenvalue sum $\sum_{i=1}^r \mu_i$, the upper bounds, and their absolute errors for $C_4$ at $\alpha=1/2$.}
		\label{tab:c4_bounds_comparison}
		\begin{tabular}{ccccccc}
			\toprule
			$r$ & $S$ & $v_t$ & Exact $\sum_{i=1}^r \mu_i$ & Upper Bound & Absolute Error \\
			\midrule
			1 & $\{v_1, v_2\}$ & $v_2$ & 0.0000 & 0.5000 & 0.5000 \\
			2 & $\{v_1, v_2, v_3\}$ & $v_3$ & 1.0000 & 1.6910 & 0.6910 \\
			3 & $V(C_4)$ & $v_4$ & 2.0000 & 2.6340 & 0.6340 \\
			\bottomrule
		\end{tabular}
	\end{table}
	\FloatBarrier
\end{example}

\begin{theorem}\label{Thm4}
	Let $G$ be a mixed graph with $n$ vertices, let $0\leq\alpha\leq1$, and let $K=G[S]$ be an induced mixed subgraph on $p=r+1$ vertices with vertex set $S=\{v_1,v_2,\ldots,v_r,v_t\}$, where $1\leq r\leq n-1$. For any $v_k\in\{v_1,\ldots,v_r\}$,
	\begin{align}\label{eq13}
		\sum_{i=1}^{r}\mu_i(A_\alpha(G))
		\leq {}&\alpha\sum_{i=1}^{r}d_G(v_i)\nonumber\\
		&-\frac{\sqrt{\alpha^2d_{tk}^2+
				4(1-\alpha)^2|h_{tk}|^2}-\alpha d_{tk}}{2},
	\end{align}
	where $d_{tk} = d_G(v_t) - d_G(v_k)$.
\end{theorem}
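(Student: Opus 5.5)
The plan is to follow the scheme of the proof of Theorem~\ref{Thm3}, but to bound the top eigenvalue of the principal submatrix using a $2\times2$ principal submatrix instead of a compression. Let $M=A_\alpha(G)[S]$ be the principal submatrix of order $p=r+1$ indexed by $S=\{v_1,\ldots,v_r,v_t\}$, with eigenvalues $\mu_1(M)\le\cdots\le\mu_{r+1}(M)$. Cauchy's interlacing theorem gives $\mu_i(A_\alpha(G))\le\mu_i(M)$ for $1\le i\le r$, and summing yields
\[
\sum_{i=1}^r\mu_i(A_\alpha(G))\le\sum_{i=1}^r\mu_i(M)=\operatorname{tr}(M)-\mu_{r+1}(M).
\]
The diagonal entries of $M$ are $\alpha d_G(v_j)$, so
\[
\operatorname{tr}(M)=\alpha\sum_{i=1}^r d_G(v_i)+\alpha d_G(v_t).
\]

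It then remains to show that
\[
\mu_{r+1}(M)\ge \alpha d_G(v_t)+\frac{\sqrt{\alpha^2d_{tk}^2+4(1-\alpha)^2|h_{tk}|^2}-\alpha d_{tk}}{2}.
\]
To do this, I take the $2\times2$ principal submatrix of $M$ on $\{v_k,v_t\}$:
\[
N=\begin{pmatrix}\alpha d_G(v_k)&(1-\alpha)h_{kt}\\(1-\alpha)\overline{h_{kt}}&\alpha d_G(v_t)\end{pmatrix}.
\]
Its larger eigenvalue is
\[
\lambda_{\max}(N)=\frac{\alpha\bigl(d_G(v_k)+d_G(v_t)\bigr)+\sqrt{\alpha^2d_{tk}^2+4(1-\alpha)^2|h_{tk}|^2}}{2},
\]
using $|h_{kt}|=|h_{tk}|$ and $d_{tk}=d_G(v_t)-d_G(v_k)$. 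Interlacing again, or simply the Rayleigh quotient on vectors supported on $\{v_k,v_t\}$, gives $\mu_{r+1}(M)\ge\lambda_{\max}(N)$.

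A one-line rearrangement shows that
\[
\lambda_{\max}(N)=\alpha d_G(v_t)+\frac{\sqrt{\alpha^2d_{tk}^2+4(1-\alpha)^2|h_{tk}|^2}-\alpha d_{tk}}{2},
\]
since $\alpha(d_G(v_k)+d_G(v_t))/2=\alpha d_G(v_t)-\alpha d_{tk}/2$. Substituting this into $\operatorname{tr}(M)-\mu_{r+1}(M)$ cancels the $\alpha d_G(v_t)$ term and yields \eqref{eq13}.

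There is no real obstacle here. The only points needing care are the sign bookkeeping in the final rearrangement and noting that the case $h_{tk}=0$ is covered automatically: the bound then reduces to $\mu_{r+1}(M)\ge\max\{\alpha d_G(v_k),\alpha d_G(v_t)\}$, which is still a valid consequence of interlacing.
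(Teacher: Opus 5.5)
Your proposal is correct and follows the paper's own proof essentially step for step. Both arguments use Cauchy interlacing to pass to $M=A_\alpha(G)[S]$ and write $\sum_{i=1}^r\mu_i(M)=\operatorname{tr}(M)-\mu_{r+1}(M)$. Both then bound $\mu_{r+1}(M)$ from below by the largest eigenvalue of the $2\times2$ principal submatrix on $\{v_k,v_t\}$, rearranged to cancel the $\alpha d_G(v_t)$ term.
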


\begin{proof}
	Let $M=A_\alpha(G)[S]$ be the principal submatrix indexed by $S$.
	The principal submatrix of $M$ indexed by $\{v_k,v_t\}$ is
	\[
	B=\begin{bmatrix}
		\alpha d_G(v_k)&(1-\alpha)h_{kt}\\
		(1-\alpha)h_{tk}&\alpha d_G(v_t)
	\end{bmatrix}.
	\]
	Its largest eigenvalue is
	\[
	\mu_{\max}(B) = \alpha d_G(v_t)
	+\frac{\sqrt{\alpha^2 d_{tk}^2+4(1-\alpha)^2|h_{tk}|^2}
		-\alpha d_{tk}}{2}.
	\]
	By Cauchy's interlacing theorem, $\mu_p(M)\geq\mu_{\max}(B)$. Moreover,
	\[
	\operatorname{tr}(M)
	=\alpha\sum_{i=1}^r d_G(v_i)+\alpha d_G(v_t).
	\]
	Therefore,
	\begin{align*}
		\sum_{i=1}^{r}\mu_i(M)
		&=\operatorname{tr}(M)-\mu_p(M)\\
		&\leq\alpha\sum_{i=1}^{r}d_G(v_i)+\alpha d_G(v_t)-\mu_{\max}(B).
	\end{align*}
	Applying Cauchy's interlacing principle again,
	\[
	\sum_{i=1}^r\mu_i(A_\alpha(G))
	\leq\sum_{i=1}^r\mu_i(M),
	\]
	which gives \eqref{eq13}.
\end{proof}

\begin{remark}\label{rem:eqthm4}
	Equality in \eqref{eq13} holds precisely when the largest eigenvalue of the
	$2\times2$ principal submatrix $B$ equals the largest eigenvalue of
	$A_\alpha(G)[S]$ and equality holds in the sum form of the interlacing
	relation between $A_\alpha(G)$ and $A_\alpha(G)[S]$.
\end{remark}

\begin{corollary}\label{cor2}
	Under the hypotheses of Theorem~\ref{Thm4}, maximize over
	$k=1,\dots,r$. The strongest bound in this family is
	\begin{align}\label{eq14}
		\sum_{i=1}^{r}\mu_i(A_\alpha(G))
		\leq {}&\alpha\sum_{i=1}^{r}d_G(v_i)\nonumber\\
		&-\max_{1\leq k\leq r}
		\left\{\frac{\sqrt{\alpha^2d_{tk}^2+
				4(1-\alpha)^2|h_{tk}|^2}-\alpha d_{tk}}{2}\right\}.
	\end{align}
\end{corollary}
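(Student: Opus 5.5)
This corollary follows directly from Theorem~\ref{Thm4}, because inequality \eqref{eq13} holds separately for every choice of $v_k\in\{v_1,\ldots,v_r\}$. For a fixed set $S$ and target vertex $v_t$, the term $\alpha\sum_{i=1}^r d_G(v_i)$ on the right-hand side of \eqref{eq13} does not depend on $k$. Only the subtracted quantity
\[
g_k=\frac{\sqrt{\alpha^2d_{tk}^2+4(1-\alpha)^2|h_{tk}|^2}-\alpha d_{tk}}{2}
\]
varies with $k$.

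The plan is as follows. First, invoke Theorem~\ref{Thm4} for each $k=1,\ldots,r$. This gives $r$ valid upper bounds of the form $\alpha\sum_i d_G(v_i)-g_k$ for the same quantity $\sum_{i=1}^r\mu_i(A_\alpha(G))$. Since the quantity is bounded above by each of these bounds, it is bounded above by their minimum. That minimum equals $\alpha\sum_i d_G(v_i)-\max_k g_k$, which is exactly \eqref{eq14}. The maximum exists because the index set is finite and nonempty ($r\geq1$).

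Second, justify the phrase ``strongest bound in this family.'' The right-hand side of \eqref{eq14} is, by construction, no larger than the right-hand side of \eqref{eq13} for every admissible $k$. It is also attained by some $k$. Hence no member of the family is strictly stronger.

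As a side remark, I would note that $g_k\geq0$. This holds because $\sqrt{\alpha^2d_{tk}^2+4(1-\alpha)^2|h_{tk}|^2}\geq\alpha|d_{tk}|\geq\alpha d_{tk}$. Moreover, $g_k>0$ exactly when $\alpha<1$ and $v_k$ is adjacent to $v_t$. So the maximum is positive precisely when $\alpha<1$ and $v_t$ has a neighbour in $S$. Otherwise the bound reduces to the trace bound $\sum_{i=1}^r\mu_i(A_\alpha(G))\leq\alpha\sum_i d_G(v_i)$.

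There is no real obstacle. The only point needing care is that the hypotheses of Theorem~\ref{Thm4} (the fixed $S$, $v_t$, $r$ and $\alpha$) do not depend on $k$, so the family of inequalities is applied to one and the same left-hand side.
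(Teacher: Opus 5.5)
Your proof is correct and is exactly the intended argument; the paper gives no proof beyond ``maximize over $k$.'' Each instance of \eqref{eq13} bounds the same left-hand side with the same $k$-independent term $\alpha\sum_{i=1}^r d_G(v_i)$, so the minimum of the right-hand sides is obtained by subtracting $\max_k g_k$. Your side remark, however, is wrong as stated and should be dropped or fixed. If $h_{tk}=0$ or $\alpha=1$, then $B$ is diagonal and $g_k=\alpha\max\{0,\,d_G(v_k)-d_G(v_t)\}$. So in fact $g_k>0$ if and only if either $\alpha<1$ and $h_{tk}\neq0$, or $\alpha>0$ and $d_G(v_k)>d_G(v_t)$. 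In particular, the bound need not reduce to the trace bound when $\alpha=1$ or when $v_t$ has no neighbour in $S$.
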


\begin{example}\label{ex3}
	Let $A_\alpha(G) = \alpha D(G) + (1-\alpha) H(G)$, where $D(G)$ is the diagonal matrix of vertex degrees and $H(G)$ is the Hermitian adjacency matrix of $G$. To compare the bounds obtained in Theorem~\ref{Thm3} and Theorem~\ref{Thm4}, consider the directed path $\vec{P}_4$ and the star digraph $\vec{S}_4$ on $n = 4$ vertices shown in Figure~\ref{fig:digraphs}.
	
	\begin{figure}[h!]
		\centering
		\begin{tikzpicture}[scale=0.95, >=stealth, main/.style={circle, draw, fill=blue!10, inner sep=0pt, minimum size=16pt, font=\small}]
			\node[main] (p1) at (0,0) {$v_1$};
			\node[main] (p2) at (1.4,0) {$v_2$};
			\node[main] (p3) at (2.8,0) {$v_3$};
			\node[main] (p4) at (4.2,0) {$v_4$};
			
			\draw[->, thick] (p1) -- (p2);
			\draw[->, thick] (p2) -- (p3);
			\draw[->, thick] (p3) -- (p4);
			\node at (2.1,-0.75) {(a) Directed path $\vec{P}_4$};
			
			\node[main] (s1) at (7.0,0) {$v_1$};
			\node[main] (s2) at (5.8,0.9) {$v_2$};
			\node[main] (s3) at (8.2,0.9) {$v_3$};
			\node[main] (s4) at (7.0,-1.0) {$v_4$};
			
			\draw[->, thick] (s1) -- (s2);
			\draw[->, thick] (s1) -- (s3);
			\draw[->, thick] (s1) -- (s4);
			\node at (7.0,-1.6) {(b) Star digraph $\vec{S}_4$};
		\end{tikzpicture}
		\caption{The digraphs $\vec{P}_4$ and $\vec{S}_4$.}
		\label{fig:digraphs}
	\end{figure}
	
	For $\alpha=1/2$ and $r=2$, the two smallest eigenvalues of
	$A_{1/2}(\vec P_4)$ sum to $0.2929$, while those of
	$A_{1/2}(\vec S_4)$ sum to $0.5000$. We compare the two bounds as follows.
	\begin{enumerate}[label=(\roman*)]
		\item For $\vec P_4$, take $S=\{v_1,v_2,v_3\}$ and $v_t=v_1$. Inequality \eqref{eq11} gives
		\[
		\sum_{i=1}^{2}\mu_i(A_{1/2}(\vec P_4))\leq1.5000,
		\]
		whereas \eqref{eq14} gives the stronger bound $1.1910$.
		
		\item For $\vec S_4$, take $S=\{v_1,v_2,v_3\}$, where $v_1$ is the central vertex, and set $v_t=v_1$. Inequality \eqref{eq11} gives
		\[
		\sum_{i=1}^{2}\mu_i(A_{1/2}(\vec S_4))\leq0.6340,
		\]
		whereas \eqref{eq14} gives $0.7929$. Thus, Theorem~\ref{Thm3} gives the stronger estimate in this case.
	\end{enumerate}
	
	Consequently, the bounds in Theorems~\ref{Thm3} and~\ref{Thm4} are incomparable.
\end{example}

\begin{remark}\label{rem_structural_insight}
	The effectiveness of Theorems~\ref{Thm3} and~\ref{Thm4} depends on the local structure of $G[S]$:
	\begin{enumerate}[label=(\roman*)]
		\item Theorem~\ref{Thm3} uses all edges from the target vertex to $G[S]-v_t$ simultaneously and can be effective when $v_t$ has several neighbours in $G[S]$.
		\item Theorem~\ref{Thm4} uses one vertex pair and can be effective when a suitable pair produces a large two-dimensional compression eigenvalue.
	\end{enumerate}
	Hence, neither bound dominates the other in general, and the better available estimate is the minimum of the right-hand sides of \eqref{eq11} and \eqref{eq14}.
\end{remark}

\section{Applications to iterative graph filtering and directed graph learning}
\label{sec:application}

Hermitian graph operators are useful in computer science because they encode
edge directions while retaining a real spectrum and an orthonormal eigenbasis.
These properties underlie spectral filtering, representation learning, and
signal processing on directed graphs; see, for example,
\cite{furutani,magnet}. We show that the bounds developed above provide
certified parameters for an iterative graph filter, quantify an unavoidable
limitation on its best possible convergence rate, and certify the Lipschitz
stability of a residual graph-learning layer.

Let $b\in\mathbb{C}^n$ be a signal supported on the vertices of a connected
mixed graph $G$, and consider the Hermitian positive-definite system
\begin{equation}\label{eq:network-system}
	A_\alpha x=b,\qquad \frac12<\alpha<1.
\end{equation}
Equivalently, its solution is the unique minimizer of
\[
\mathcal{E}(x)=\frac12x^*A_\alpha x-\operatorname{Re}(b^*x).
\]
The Richardson iteration \cite{saad} for \eqref{eq:network-system} is
\begin{equation}\label{eq:richardson}
	x^{(k+1)}=x^{(k)}+\gamma\bigl(b-A_\alpha x^{(k)}\bigr).
\end{equation}
It is a local graph computation: at vertex $v_i$,
\begin{equation}\label{eq:node-update}
	x_i^{(k+1)}
	=x_i^{(k)}+\gamma\left(
	b_i-\alpha d_i x_i^{(k)}
	-(1-\alpha)\sum_{j=1}^n h_{ij}x_j^{(k)}
	\right).
\end{equation}
Thus each update uses only the current signal values at adjacent vertices,
with the phase of $h_{ij}$ retaining the direction of the corresponding edge.

For the following result, define the computable quantities
\begin{equation}\label{eq:application-parameters}
	\underline{\lambda}=(2\alpha-1)\delta,\qquad
	U=r_{\max},\qquad
	L=\max_{d_i>0}
	\sqrt{(\alpha d_i-\beta_i)^2+4(1-\alpha)^2d_i},
\end{equation}
where $r_{\max}$ is the largest real zero in Theorem~\ref{Thm1}, and
$\beta_i$ is defined in Theorem~\ref{thm:local-compression-spread}. All three
quantities depend only on the graph parameters used in the preceding results.

\begin{theorem}\label{thm:richardson-application}
	Let $G$ be a connected mixed graph and let $\frac12<\alpha<1$. Then:
	\begin{enumerate}[label=(\roman*)]
		\item The system \eqref{eq:network-system} has a unique solution $x^*$.
		With
		\[
		\gamma_{\rm c}=\frac{2}{\underline{\lambda}+U},
		\qquad
		q_{\rm c}=\frac{U-\underline{\lambda}}
		{U+\underline{\lambda}},
		\]
		the iteration \eqref{eq:richardson} satisfies
		\begin{equation}\label{eq:certified-rate}
			\|x^{(k)}-x^*\|_2
			\leq q_{\rm c}^{\,k}\|x^{(0)}-x^*\|_2.
		\end{equation}
		\item If the extreme eigenvalues are known, the optimal constant step
		size and its contraction factor are
		\begin{equation}\label{eq:optimal-richardson}
			\gamma_{\rm opt}
			=\frac{2}{\mu_1(A_\alpha)+\mu_n(A_\alpha)},
			\qquad
			q_{\rm opt}
			=\frac{S(A_\alpha)}
			{\mu_1(A_\alpha)+\mu_n(A_\alpha)}.
		\end{equation}
		Moreover, the graph invariants in \eqref{eq:application-parameters}
		give the computable limitation
		\begin{equation}\label{eq:optimal-limitation}
			q_{\rm opt}\geq\frac{L}{2U-L}.
		\end{equation}
	\end{enumerate}
\end{theorem}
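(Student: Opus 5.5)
Part (i) will use positive definiteness and spectral enclosure. By the proof of Theorem~\ref{Thm1}, $A_\alpha=(2\alpha-1)D+(1-\alpha)(D+H)$ with $D+H\succeq0$, so $\mu_1(A_\alpha)\geq(2\alpha-1)\delta=\underline{\lambda}>0$. This uses connectivity, which gives $\delta>0$; it is the same fact as the Gershgorin bound $\mu_1\ge\ell_\alpha$ in Theorem~\ref{Thm2}. Hence $A_\alpha$ is positive definite and the solution $x^*$ is unique. Theorem~\ref{Thm1} gives $\mu_n\le r_{\max}=U$, so the whole spectrum lies in $[\underline{\lambda},U]$.

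The error $e^{(k)}=x^{(k)}-x^*$ satisfies $e^{(k+1)}=(I-\gamma A_\alpha)e^{(k)}$. Since $I-\gamma A_\alpha$ is Hermitian, its spectral norm is $\max_j|1-\gamma\mu_j|$. With $\gamma=\gamma_{\rm c}$ each $\mu\in[\underline\lambda,U]$ gives $|1-\gamma_{\rm c}\mu|\le\max\{1-\gamma_{\rm c}\underline\lambda,\ \gamma_{\rm c}U-1\}$, and both quantities equal $q_{\rm c}$ by direct computation. Iterating yields \eqref{eq:certified-rate}. Because $0<\underline\lambda\le U$, we have $0\le q_{\rm c}<1$, so this is a genuine contraction.

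For part (ii), the contraction factor for step $\gamma$ is $\rho(\gamma)=\max\{|1-\gamma\mu_1|,|1-\gamma\mu_n|\}$, since $|1-\gamma\mu|$ is convex in $\mu$ and is maximized at an endpoint. The function $\rho(\gamma)$ is minimized where the decreasing branch $1-\gamma\mu_1$ meets the increasing branch $\gamma\mu_n-1$. This happens at $\gamma_{\rm opt}=2/(\mu_1+\mu_n)$ and gives $q_{\rm opt}=(\mu_n-\mu_1)/(\mu_1+\mu_n)=S/(\mu_1+\mu_n)$. This is the standard Chebyshev-free Richardson argument, and I will only sketch it: for $\gamma<\gamma_{\rm opt}$ the first branch exceeds $q_{\rm opt}$, and for $\gamma>\gamma_{\rm opt}$ the second does.

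For \eqref{eq:optimal-limitation}, I will write $q_{\rm opt}=S/(2\mu_n-S)$, using $\mu_1+\mu_n=2\mu_n-S$. The function $f(s,u)=s/(2u-s)$ is increasing in $s$ and decreasing in $u$ on the region $0\le s<2u$. Theorem~\ref{thm:local-compression-spread} gives $S\ge L$, and Theorem~\ref{Thm1} gives $\mu_n\le U$. Positivity $\mu_1>0$ gives $S<2\mu_n\le 2U$, and also $L\le S<2\mu_n$, so every denominator stays positive. Chaining the monotonicities gives $q_{\rm opt}=f(S,\mu_n)\ge f(L,\mu_n)\ge f(L,U)$. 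The main point requiring care is exactly this monotonicity bookkeeping, in particular checking that $2U-L>0$ so that the right-hand side is meaningful. That follows from $L\le S<2\mu_n\le 2U$.
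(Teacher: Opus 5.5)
Your proposal is correct and follows the paper's proof essentially step for step: the same splitting $A_\alpha=(2\alpha-1)D+(1-\alpha)(D+H)$ for the enclosure $\sigma(A_\alpha)\subseteq[\underline{\lambda},U]$, the same error recursion with $\|I-\gamma A_\alpha\|_2=\max_j|1-\gamma\mu_j|$, the equal-magnitude endpoint condition for $\gamma_{\rm opt}$, and the rewriting $q_{\rm opt}=S/(2\mu_n-S)$ followed by monotonicity. The only differences are cosmetic. You apply the two monotonicities in the opposite order (first $S\to L$, then $\mu_n\to U$), and you explicitly verify $2U-L>0$, which the paper leaves implicit.
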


\begin{proof}
	As observed in the proof of Theorem~\ref{Thm1},
	\[
	A_\alpha=(2\alpha-1)D+(1-\alpha)(D+H).
	\]
	Here $D$ is positive definite and $D+H$ is positive semidefinite.
	Consequently, $A_\alpha$ is positive definite and
	\[
	0<\underline{\lambda}\leq\mu_1(A_\alpha)
	\leq\mu_n(A_\alpha)\leq U.
	\]
	This proves the existence and uniqueness assertion. If
	$e^{(k)}=x^{(k)}-x^*$, then
	\[
	e^{(k+1)}=(I-\gamma A_\alpha)e^{(k)}.
	\]
	Since $A_\alpha$ is Hermitian,
	\[
	\|I-\gamma A_\alpha\|_2
	=\max_{1\leq i\leq n}|1-\gamma\mu_i(A_\alpha)|.
	\]
	With $\gamma=\gamma_{\rm c}$, the spectral inclusion
	$\sigma(A_\alpha)\subseteq[\underline{\lambda},U]$ gives
	\[
	\|I-\gamma_{\rm c}A_\alpha\|_2
	\leq\max\{|1-\gamma_{\rm c}\underline{\lambda}|,
	|1-\gamma_{\rm c}U|\}=q_{\rm c}.
	\]
	Iteration of this inequality proves \eqref{eq:certified-rate}.
	
	For a positive-definite Hermitian matrix, the function
	$\max_i|1-\gamma\mu_i(A_\alpha)|$ is minimized when the errors at the
	two spectral endpoints have equal magnitudes and opposite signs. Thus
	\[
	1-\gamma_{\rm opt}\mu_1(A_\alpha)
	=-\bigl(1-\gamma_{\rm opt}\mu_n(A_\alpha)\bigr),
	\]
	which gives \eqref{eq:optimal-richardson}. Finally, Theorem
	\ref{thm:local-compression-spread} gives $S(A_\alpha)\geq L$, whereas
	$\mu_n(A_\alpha)\leq U$. Therefore,
	\[
	q_{\rm opt}
	=\frac{S(A_\alpha)}{2\mu_n(A_\alpha)-S(A_\alpha)}
	\geq\frac{S(A_\alpha)}{2U-S(A_\alpha)}
	\geq\frac{L}{2U-L},
	\]
	because $s\mapsto s/(2U-s)$ is increasing for $0\leq s<2U$.
\end{proof}

The same certified step size controls the sensitivity of a residual
message-passing layer. This gives a direct connection with spectral graph
neural networks for directed data.

\begin{corollary}\label{cor:gnn-stability}
	Under the assumptions of Theorem~\ref{thm:richardson-application}, let
	$X\in\mathbb{C}^{n\times p}$ be a node-feature matrix, let
	$W\in\mathbb{C}^{p\times q}$ be a trainable weight matrix, and let
	$B\in\mathbb{C}^{n\times q}$. Suppose that
	$\sigma:\mathbb{C}^{n\times q}\to\mathbb{C}^{n\times q}$ is
	$1$-Lipschitz with respect to the Frobenius norm. Define
	\begin{equation}\label{eq:gnn-layer}
		\mathcal{F}(X)
		=\sigma\!\left((I-\gamma_{\rm c}A_\alpha)XW+B\right).
	\end{equation}
	Then, for all $X,Y\in\mathbb{C}^{n\times p}$,
	\begin{equation}\label{eq:gnn-lipschitz}
		\|\mathcal{F}(X)-\mathcal{F}(Y)\|_F
		\leq q_{\rm c}\|W\|_2\|X-Y\|_F.
	\end{equation}
	Consequently, the layer is nonexpansive whenever
	$q_{\rm c}\|W\|_2\leq1$ and contractive whenever
	$q_{\rm c}\|W\|_2<1$. More generally, a composition of $s$ such layers,
	with weight matrices $W_0,\ldots,W_{s-1}$ and $1$-Lipschitz
	activations, satisfies
	\begin{equation}\label{eq:deep-gnn-stability}
		\|X^{(s)}-Y^{(s)}\|_F
		\leq
		\prod_{\ell=0}^{s-1}\bigl(q_{\rm c}\|W_\ell\|_2\bigr)
		\|X^{(0)}-Y^{(0)}\|_F.
	\end{equation}
\end{corollary}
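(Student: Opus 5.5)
The plan is to reduce everything to the operator-norm estimate $\|I-\gamma_{\rm c}A_\alpha\|_2\leq q_{\rm c}$ established in the proof of Theorem~\ref{thm:richardson-application}(i), and then combine it with two elementary norm inequalities for matrix products.

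First, fix $X,Y$ and set $P=I-\gamma_{\rm c}A_\alpha$. Since $\sigma$ is $1$-Lipschitz in the Frobenius norm and the bias $B$ cancels in the difference,
\[
\|\mathcal{F}(X)-\mathcal{F}(Y)\|_F\leq\|P(X-Y)W\|_F .
\]
Next apply the mixed-norm inequality $\|MNK\|_F\leq\|M\|_2\|N\|_F\|K\|_2$. To justify it, write $\|MN\|_F^2=\sum_j\|Mn_j\|_2^2\leq\|M\|_2^2\|N\|_F^2$ column by column. For right multiplication, pass to conjugate transposes: $\|NK\|_F=\|K^*N^*\|_F\leq\|K\|_2\|N\|_F$. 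This gives
\[
\|\mathcal{F}(X)-\mathcal{F}(Y)\|_F\leq\|P\|_2\|W\|_2\|X-Y\|_F .
\]
Finally, insert $\|P\|_2\leq q_{\rm c}$. This bound holds because $A_\alpha$ is Hermitian with $\sigma(A_\alpha)\subseteq[\underline{\lambda},U]$, so $\|P\|_2=\max_i|1-\gamma_{\rm c}\mu_i|\leq q_{\rm c}$, exactly as in the theorem. The result is \eqref{eq:gnn-lipschitz}.

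The nonexpansive and contractive statements follow immediately from \eqref{eq:gnn-lipschitz}. For the deep bound \eqref{eq:deep-gnn-stability}, define $X^{(\ell+1)}=\sigma_\ell\bigl(PX^{(\ell)}W_\ell+B_\ell\bigr)$, and similarly for $Y^{(\ell)}$. Inducting on $s$ with the one-layer estimate at each step, the constants multiply and give the product. One dimensional point needs care: the $\ell$-th layer maps $n\times p_\ell$ matrices to $n\times p_{\ell+1}$ matrices, but the argument above is insensitive to the widths, so it applies at every layer.

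There is no real obstacle here. The only point deserving care is that the Frobenius/spectral mixed inequality must be applied on both sides, with the left factor $P$ and the right factor $W$ handled separately. One should also note that $q_{\rm c}$ bounds $\|P\|_2$ itself, not merely its spectral radius. This distinction is harmless because $P$ is Hermitian.
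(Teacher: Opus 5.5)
Your proposal is correct and follows the same route as the paper: the bias cancels and the $1$-Lipschitz property of $\sigma$ applies, then the mixed inequality $\|PNW\|_F\leq\|P\|_2\|N\|_F\|W\|_2$, then $\|I-\gamma_{\rm c}A_\alpha\|_2\leq q_{\rm c}$ from Theorem~\ref{thm:richardson-application}, and finally iteration layer by layer. Your column-wise justification of the mixed Frobenius/spectral inequality is more precise than the paper's appeal to ``submultiplicativity of the Frobenius norm,'' which on its own would only give Frobenius norms of $P$ and $W$ rather than spectral norms.
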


\begin{proof}
	By the $1$-Lipschitz property of $\sigma$, submultiplicativity of the
	Frobenius norm, and Theorem~\ref{thm:richardson-application},
	\begin{align*}
		\|\mathcal{F}(X)-\mathcal{F}(Y)\|_F
		&\leq\|(I-\gamma_{\rm c}A_\alpha)(X-Y)W\|_F\\
		&\leq\|I-\gamma_{\rm c}A_\alpha\|_2
		\|X-Y\|_F\|W\|_2\\
		&\leq q_{\rm c}\|W\|_2\|X-Y\|_F.
	\end{align*}
	This proves \eqref{eq:gnn-lipschitz}. Applying this estimate successively
	to the layers proves \eqref{eq:deep-gnn-stability}.
\end{proof}

\begin{remark}
	Corollary~\ref{cor:gnn-stability} provides a graph-computable robustness
	certificate: perturbations of the input features cannot be amplified when
	$q_{\rm c}\|W\|_2\leq1$. Conversely, if the product in
	\eqref{eq:deep-gnn-stability} decays rapidly with depth, distinct feature
	representations are forced closer together. The estimate therefore also
	quantifies a sufficient mechanism for loss of feature distinguishability in
	deep residual message passing. No claim about predictive accuracy is made;
	the conclusion is a deterministic stability guarantee.
\end{remark}

\begin{remark}
	The two conclusions have complementary meanings. Inequality
	\eqref{eq:certified-rate} supplies a stable step size and an a priori
	convergence guarantee without computing any eigenvalue. In contrast,
	\eqref{eq:optimal-limitation} shows that the locally induced spectral
	spread prevents any constant-step Richardson filter from having an
	arbitrarily small worst-case contraction factor. Moreover, to ensure
	\[
	\|x^{(k)}-x^*\|_2\leq
	\varepsilon\|x^{(0)}-x^*\|_2,\qquad 0<\varepsilon<1,
	\]
	it is sufficient to take
	\[
	k\geq\left\lceil\frac{\log\varepsilon}{\log q_{\rm c}}\right\rceil.
	\]
	For a sparse mixed graph, the nodewise rule \eqref{eq:node-update} requires
	$O(d_i)$ work at vertex $v_i$ and $O(n+m)$ arithmetic operations for one
	complete network update.
\end{remark}

\begin{example}\label{ex:directed-star-application}
	Consider the directed star $\vec S_6$ whose five arcs point from the central
	vertex to the leaves, and take $\alpha=0.7$. The Hermitian adjacency matrix
	of $\vec S_6$ is diagonally unitarily similar to the adjacency matrix of
	$K_{1,5}$; hence the two corresponding $A_\alpha$-matrices have the same
	spectrum. Here
	$
	m=5,\quad \delta=1,\quad \Delta=5,\quad
	Z_1=30,\quad F=130,\qquad t_H=0.
$
	The two nonconstant eigenvalues are
	$
	\mu_1=0.5476,\quad \mu_n=3.6524,
	$
	and the remaining four eigenvalues are equal to $0.7$. At the central
	vertex, $d_i=5$, $q_i=5$, and $\tau_i=0$, so the local compression bound is
	exact:
	\[
	L=\sqrt{(3.5-0.7)^2+4(0.3)^2(5)}
	=\sqrt{9.64}=3.1048=S(A_{0.7}).
	\]
	The cubic in Theorem~\ref{Thm1} has largest real zero
	$U=3.6528$, while $\underline{\lambda}=0.4$. Consequently,
	\[
	\gamma_{\rm c}=0.4935,\qquad q_{\rm c}=0.8026,
	\]
	and \eqref{eq:certified-rate} gives a completely graph-computable
	convergence certificate. The exact optimal quantities are
	\[
	\gamma_{\rm opt}=0.4762,\qquad q_{\rm opt}=0.7392.
	\]
	On the other hand, the new limitation \eqref{eq:optimal-limitation} gives
	\[
	q_{\rm opt}\geq\frac{3.1048}{2(3.6528)-3.1048}
	=0.7391.
	\]
	Thus, in this directed-network example, the proposed spectral estimates
	nearly recover the exact optimal contraction factor without diagonalizing
	$A_{0.7}$.\\
	The same graph also illustrates Corollary~\ref{cor:gnn-stability}.
	For the residual graph-learning layer \eqref{eq:gnn-layer}, the computed
	value $q_{\rm c}=0.8026$ gives
	\[
	\|\mathcal{F}(X)-\mathcal{F}(Y)\|_F
	\leq0.8026\|W\|_2\|X-Y\|_F.
	\]
	Thus, the layer is certified to be nonexpansive whenever
	$\|W\|_2\leq1/0.8026=1.2450$. For example, if
	$\|W\|_2\leq0.9$, then its Lipschitz constant is at most
	$0.8026(0.9)=0.7224$. Consequently, a composition of five such
	layers satisfies
	\[
	\|X^{(5)}-Y^{(5)}\|_F
	\leq(0.7224)^5\|X^{(0)}-Y^{(0)}\|_F
	=0.1968\|X^{(0)}-Y^{(0)}\|_F.
	\]
	Hence, this graph-computable certificate guarantees that the five-layer
	mapping reduces input-feature perturbations to at most approximately
	$19.68\%$ of their original magnitude.
\end{example}

\section{Conclusion}
We derived spectral-moment and vertexwise bounds for the extreme eigenvalues of Hermitian $A_\alpha$-matrices. The diagonal spectral-enclosure argument produced a degree-sequence refinement whose specialization to ordinary graphs strictly improves an earlier degree-based spread bound. A local two-dimensional compression further yielded a spread estimate involving neighbouring degrees and gain-weighted triangles; this estimate is exact for every mixed orientation of a star and is independent of the known Zagreb-index bound. For sums of the smallest eigenvalues, two complementary estimates were obtained from aggregate and pairwise principal compressions. Finally, the applications to Richardson graph filtering and residual graph-learning layers showed that the spectral bounds provide a computable convergence certificate, a lower bound on the optimal contraction factor, and a layerwise Lipschitz stability guarantee for directed-network computations. In the directed-star example, the contraction-factor bound is numerically close to the exact value.
\section*{Acknowledgments}
Both authors gratefully acknowledge support from the Anusandhan National Research Foundation (ANRF), Government of India, under Grant No. ANRF/IRG/2025/000280/MS.
\section*{Declarations}
\textbf{Conflict of interest.} The authors declare that they have no conflict of interest.\\
\textbf{Data availability.} No datasets were generated or analysed during the current study.

\end{document}